\newcommand\NoBlackBoxes{\global\overfullrule0pt}
\theoremstyle{plain} 
\newtheorem{theorem}{Theorem} 
\newtheorem{lemma}[theorem]{Lemma}
\newtheorem{proposition}[theorem]{Proposition}
\newtheorem{corollary}[theorem]{Corollary}
\def\4{\kern1pt}
\def\6{\vphantom0}
\def\8{\kern-10pt}
\def\7#1{_{(#1)}}
\theoremstyle{definition}
\newtheorem*{assumption (H1)}{Assumption (H1)}
\newtheorem*{assumption (H2)}{Assumption (H2)}
\theoremstyle{remark}
\newtheorem{remark}[theorem]{Remark}
\numberwithin{equation}{section}
\numberwithin{theorem}{section}
\let\serieslogo@\relax
\let\@setcopyright\relax
\def\speciallabelmark#1{\def\@currentlabel{#1}}
\newcommand{\Cal}{\mathcal}
\newcommand{\beqa}{\begin{eqnarray}}
\newcommand{\beqan}{\begin{eqnarray*}}
\newcommand{\eeqa}{\end{eqnarray}}
\newcommand{\eeqan}{\end{eqnarray*}}
\def\beq#1\eeq{\begin{equation}#1\end{equation}}
\begin{document}

\def\ffrac#1#2{\raise.5pt\hbox{\small$\4\displaystyle\frac{\,#1\,}{\,#2\,}\4$}}
\def\ovln#1{\,{\overline{\!#1}}}
\def\ve{\varepsilon}
\def\kar{\beta_r}

\title{Rate of Convergence in the entropic free CLT}

\author{G. P. Chistyakov$^{1,2}$}
\thanks{1) Faculty of Mathematics , 
University of Bielefeld, Germany.} 
\thanks{2) Research supported by SFB 701.}
\address
{Gennadii Chistyakov \newline
Fakult\"at f\"ur Mathematik\newline
Universit\"at Bielefeld\newline
Postfach 100131\newline
33501 Bielefeld \newline
Germany}
\email {chistyak@math.uni-bielefeld.de} 

\author{F. G\"otze$^{1,2}$}
\address
{Friedrich G\"otze\newline
Fakult\"at f\"ur Mathematik\newline
Universit\"at Bielefeld\newline
Postfach 100131\newline
33501 Bielefeld \newline
Germany}
\email {goetze@math.uni-bielefeld.de}

\date{December, 2011}

\subjclass
{Primary 46L50, 60E07; secondary 60E10} 
\keywords  {Free random variables, Cauchy's transform, free entropy, free central limit theorem}

\maketitle
\markboth{ G. P. Chistyakov and F. G\"otze}{Rate of convergence}

\begin{abstract}
We prove an expansion for densities in the free CLT and apply this result to an expansion
in the entropic free central limit theorem assuming a~moment condition of order 8 for the free summands.  
\end{abstract}

\section{Introduction}
Free convolutions were introduced
by D. Voiculescu~\cite{Vo:1986}, \cite{Vo:1987} and have been studied 
intensively in order to understand non commutative probability.
The~key concept here is the~notion of freeness,
which can be interpreted as a~kind of independence for
non commutative random variables. As in~classical probability theory where
the~concept of independence gives rise to the~classical convolution, 
the~concept of freeness leads to a~binary operation on the~probability measures, 
the~free convolution. Many classical results 
in the~theory of addition of independent random variables have their
counterparts in Free Probability, such as the~Law of Large Numbers,
the~Central Limit Theorem, the~L\'evy-Khintchine formula and others.
We refer to Voiculescu, Dykema and Nica \cite{Vo:1992} and Hiai 
and Petz~\cite{HiPe:2000} for an~introduction 
to these topics.

In this paper we 
obtain an~analogue of Esseen's expansion for a~density of normalized sums of free identically distributed
random variables under moment assumptions on free summands. Using this expansion we establish
the rate of convergence of the free entropy of normalized sums of free identically distributed
random variables.

The~paper is organized as follows. In Section~2 we formulate and
discuss the main results of the~paper. In Section~3 and 4 we state 
auxiliary results. 
In Sections~5 we discuss the passage to probability measures with bounded supports.
In Sections~6 we obtain a~local asymptotic expansion for a~density
in the~CLT for free identically distributed random variables. 
In Sections~7 we study a behaviour of subordination functions in the free CLT for truncated free summands.
In Sections~8 we discuss the closeness of subordination functions in the free CLT for bounded and unbounded  free
random variables.
In Sections~9 we  prove the rate of convergence for a~density in the free CLT in $L_1(-\infty,+\infty)$.
In Sections~10 we prove the rate of convergence for the free entropy of normalized sums of free identically distributed
random variables.

\section{Results}
Denote by $\mathcal M$ the~family of all Borel probability measures
defined on the~real line $\mathbb R$.  
Let $\mu\boxplus\nu$ be
the~free (additive) convolution of $\mu$ and $\nu$ introduced by 
Voiculescu~\cite{Vo:1986} for compactly supported measures. 
Free convolution was extended by
Maassen~\cite{Ma:1992} to measures with finite variance and by Bercovici
and Voiculescu~\cite{BeVo:1993} to the~class $\mathcal M$.
Thus, $\mu\boxplus\nu=\mathcal L(X+Y)$,
where $X$ and $Y$ are free random variables such that $\mu=\mathcal L(X)$ and $\nu=\mathcal L(Y)$.

Henceforth $X,X_1,X_2,\dots$ stands for a~sequence of identically distributed random variables with
distribution $\mu=\mathcal L(X)$. Define 
$m_k(\mu):=\int_{\mathbb R}u^k\,\mu(du)$,
where $k=0,1,\dots$.

The~classical CLT says that if $X_1,X_2,\dots$ are independent
and identically distributed random variables with a~probability 
distribution $\mu$ such that $m_1(\mu)=0$ and $m_2(\mu)=1$, 
then the~distribution function $F_n(x)$ of 
\begin{equation}\label{2.1}
Y_n:=\frac{X_1+X_2+\dots +X_n}{\sqrt n}
\end{equation}
tends to the~standard Gaussian law $\Phi(x)$ as $n\to\infty$ uniformly
in $x$.

A~free analogue of this classical result was proved by Voiculescu~\cite{Vo:1985} for bounded
free random variables and 
later generalized by Maassen~\cite {Ma:1992} to unbounded random variables.
Other generalizations can be found in \cite{BeVo:1995}, \cite{BeP:1999}, \cite{ChG:2005a}, 
\cite {Ka:2007}--\cite {Ka:2007b}, \cite{P:1996}, \cite{Vo:2000}, \cite{Wa:2010}. 
When the~assumption of independence is 
replaced by the~freeness of the~non commutative random variables
$X_1,X_2,\dots,X_n$, the~limit distribution function of (\ref{2.1}) is 
the~semicircle law $w(x)$, i.e., the~distribution function 
with the~density $p_w(x):=\frac 1{2\pi}\sqrt{(4-x^2)_+}, \,x\in\mathbb R$, 
where $a_+:=\max\{a,0\}$ for $a\in\mathbb R$. Denote by $\mu_w$ the~probability measure with
the~distribution function $w(x)$. We denote as well by $\mu_n$ the probability measure with
the distribution function $F_n(x)$.

It was proved in~\cite{BelBer:2004} that if the distribution $\mu$ of $X$ is not a Dirac measure,
then in the free case $F_n(x)$ is Lebesgue absolutely continuous when $n\ge n_1=n_1(\mu)$ is sufficiently large. 
Denote by $p_n(x)$ the density of $F_n(x)$.

In the sequel we denote by $c(\mu),c_1(\mu),c_2(\mu),\dots$
positive constants depending on $\mu$ only. By $c(\mu)$ we denote generic constants in different 
(or even in the same) formulae. The symbols $c_1(\mu),c_2(\mu),\dots$ will denote explicit constants.

Wang~\cite{Wa:2010} proved that under the condition $m_2(\mu)<\infty$ 
the density $p_n(x)$ of $F_n(x)$ is continuous for sufficiently large $n$ and  
\begin{equation}\label{asden3}
p_n(x)\le c(\mu),\quad x\in\mathbb R.
\end{equation}

Assume that $m_4(\mu)<\infty,m_1(\mu)=0,m_2(\mu)=1$ and denote 
\begin{equation}\label{2.3****}
a_n:=\frac{m_3(\mu)}{\sqrt n},\quad b_n:=\frac{m_4(\mu)-m_3^2(\mu)-1}n,\quad
d_n:=\frac{m_4(\mu)-m_3^2(\mu)}n,\quad n\in\mathbb N.
\end{equation}
Denote as well $e_n:=(1-b_n)/\sqrt{1-d_n}$ and by $I_n$ the interval of the form
\begin{equation}\label{asden1}
I_n:=\Big\{x\in\mathbb R:|x-a_n|\le \frac 2{e_n}-c_1(\mu)n^{-6/5}\Big\} 
\end{equation}
with some constant $c_1(\mu)$. 

We have derived an asymptotic expansion of $p_n(x)$ for bounded 
free random variables $X_1,X_2,\dots$ in the paper~\cite{ChG:2011}. Improving the methods of this paper we
obtain an asymptotic expansion of $p_n(x)$ for the case $m_8(\mu)<\infty$.
\begin{theorem}\label{th7}
Let $m_8(\mu)<\infty$ and $m_1(\mu)=0,\,m_2(\mu)=1$. Then, for $n\ge n_1$,
\begin{equation}\label{asden}
p_n(x+a_n)=\Big(1+\frac 12 d_n-a_n^2-\frac 1{n}-a_nx-\Big(b_n-\frac 1{n}\Big)x^2\Big)
p_w(e_nx)+\frac {c(\mu)\theta}{n^{6/5}\sqrt{4-(e_nx)^2}}+\rho_n(x)
\end{equation}
for $x\in I_n-a_n$, where $\rho_n(x)$ is a continuous function such that
$0\le\rho_n(x)\le c(\mu)$ and $\int_{I_n-a_n}\rho_n(x)\,dx\le c(\mu)n^{-4}$.
Moreover,
\begin{equation}\label{asden2}
\int_{\mathbb R\setminus I_n}p_n(x)\,dx\le \frac {c(\mu)}{n^{6/5}}. 
\end{equation}
\end{theorem}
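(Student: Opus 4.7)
The overall strategy is to leverage the asymptotic expansion from \cite{ChG:2011} which is proved for bounded free summands, combined with a truncation argument and precise analytic control of subordination functions. First, I would truncate: for a suitable threshold $\tau=\tau(n)$, introduce the probability measure $\mu^{(\tau)}$ obtained by restricting $X$ to $\{|X|\le\tau\}$ (and recentering/renormalizing to preserve $m_1=0$, $m_2=1$). The hypothesis $m_8(\mu)<\infty$ controls the differences $|m_k(\mu)-m_k(\mu^{(\tau)})|$ for $k\le 8$ by $O(\tau^{k-8})$, so for an appropriate power $\tau\sim n^{\alpha}$ the corresponding quantities $a_n^{(\tau)},b_n^{(\tau)},d_n^{(\tau)},e_n^{(\tau)}$ differ from $a_n,b_n,d_n,e_n$ by $o(n^{-6/5})$. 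This is the content of Section~5.

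Second, since $\mu^{(\tau)}$ is compactly supported, I apply the bounded-case expansion of \cite{ChG:2011} to $p_n^{(\tau)}(x+a_n^{(\tau)})$. This yields the main polynomial $\times p_w(e_n^{(\tau)}x)$ term together with a square-root singular correction of order $1/\sqrt{4-(e_n^{(\tau)}x)^2}$ times a quantity that, thanks to careful tracking of the moments up to order~$8$, can be bounded by $c(\mu)/n^{6/5}$ on $I_n^{(\tau)}-a_n^{(\tau)}$. Substituting moment differences recovers the expansion stated in (\ref{asden}) with $a_n,b_n,d_n,e_n$ up to an additional error absorbed in $\rho_n(x)$.

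Third, I transfer from $p_n^{(\tau)}$ to $p_n$ via subordination. We have $G_{\mu_n}(z)=G_\mu(Z_n(z))$ and analogously $G_{\mu_n^{(\tau)}}(z)=G_{\mu^{(\tau)}}(Z_n^{(\tau)}(z))$, with subordination functions $Z_n,Z_n^{(\tau)}$ satisfying the standard fixed-point equation in the upper half plane. Sections~7 and~8 provide a quantitative bound on $|Z_n(z)-Z_n^{(\tau)}(z)|$ uniformly on a complex neighbourhood of $I_n-a_n$, as well as on the non-tangential boundary values. Applying the Stieltjes inversion formula
\begin{equation*}
p_n(x)=-\tfrac1\pi\lim_{y\downarrow 0}\operatorname{Im} G_{\mu}(Z_n(x+iy)),
\end{equation*}
and the Lipschitz behavior of $G_\mu$ near the image of $Z_n$, the difference $p_n(x+a_n)-p_n^{(\tau)}(x+a_n^{(\tau)})$ is controlled pointwise and in $L^1$ on $I_n-a_n$, the pointwise bound being absorbed into the $c(\mu)\theta n^{-6/5}/\sqrt{4-(e_nx)^2}$ correction, while the $L^1$ contribution feeds the term $\rho_n$.

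Fourth, the tail estimate (\ref{asden2}) is handled separately. Near the edges $\pm 2/e_n+a_n$ one uses the structural description of $Z_n$ together with the edge behaviour of the free convolution (essentially a square-root profile at the endpoint of the support of $\mu_n$); combined with moment estimates stemming from $m_8(\mu)<\infty$, this gives $\int_{\mathbb R\setminus I_n}p_n(x)\,dx\le c(\mu)n^{-6/5}$.

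The main obstacle is the third step: obtaining the specific rate $n^{-6/5}$ in the square-root-singular correction. The exponent arises from optimising the truncation level $\tau$ against the decay of $Z_n-Z_n^{(\tau)}$ near the endpoints of the semicircle support and the edge singularity of $p_w(e_nx)$; a too-crude control of $Z_n$ at the spectral edges would yield only $n^{-1}$ or worse. Executing this requires a sharp fixed-point analysis of the subordination equation with the eighth moment as the only input, combined with the quadratic edge behaviour of the semicircle density, to push the balance up to $6/5$.
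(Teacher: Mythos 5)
Your outline captures the correct high-level architecture — truncate the summands, use the bounded-case machinery of \cite{ChG:2011}, and transfer back via a quantitative comparison of subordination functions — and in that sense it is on the paper's track. But two of your steps contain gaps that would derail the argument if executed as written.

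First, you propose to truncate $X$ directly on $\{|X|\le\tau\}$ with recentering/renormalization, and then apply the bounded-case expansion of \cite{ChG:2011} to $\mu^{(\tau)}$ ``as a black box.'' This cannot work: the constants in the bounded-case expansion depend on the diameter of the support, which here must grow like $\sqrt n$, so applying that theorem verbatim produces an error whose $n$-dependence you have not controlled. The paper handles this differently and more carefully. It does \emph{not} truncate $X$; it truncates the Nevanlinna measure $\tau$ in the representation $F_\mu(z)=z+\int\tau(du)/(u-z)$ at $|u|\le\sqrt{n-1}/\pi$, producing $\mu^*$ supported in $[-\sqrt{n-1}/3,\sqrt{n-1}/3]$ with $m_1(\mu^*)=0$ automatically (no recentering) and $|m_j(\mu)-m_j(\mu^*)|\le c(\mu)n^{-3+(j-2)/2}$; and it then \emph{re-derives} the expansion for $\mu^*$ (Theorem~\ref{th4a}), tracking all constants through the functional equation for $T_n$, to obtain an error of the explicit order $n^{-3/2}$, not $n^{-6/5}$. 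Note in particular that the exponent $6/5$ does \emph{not} arise at this stage at all, contrary to your suggestion of ``optimising $\tau$.'' The truncation level is fixed at $\sim\sqrt n$; the rate $6/5$ appears only in Theorem~\ref{th4b}, from balancing the sub-edge offset $h^*=n^{-6/5}$ — which forces $\Im T_n\gtrsim\sqrt{h^*}$ on the boundary of $I_n$ — against the tail mass $\tau(\{|u|>\sqrt{n-1}/\pi\})\le c(\mu)n^{-3}$ (a consequence of $m_6(\tau)<\infty$, i.e., $m_8(\mu)<\infty$), via the quantity $\eta_n=\tau(\{|u|>\sqrt{n-1}/\pi\})(h^*)^{-3/2}\le c(\mu)n^{-6/5}$.

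Second, your account of where $\rho_n$ comes from is not right. You say the $L^1$ contribution of the subordination comparison ``feeds'' $\rho_n$. In the paper, $\rho_n$ has $L^1$ mass $O(n^{-4})$, far smaller than the $n^{-6/5}$ you would get from the subordination comparison; it is \emph{not} the $L^1$ error of that comparison. Instead, the paper writes $G_{\mu_n}(z)=\int\mu(du)/(S_n(z)-u/\sqrt n)$ and splits the integral at $|u|\le\sqrt{n-1}/3$ versus $|u|>\sqrt{n-1}/3$, producing $p_n=p_{n1}+p_{n2}$; then $\rho_n:=p_{n2}$ accounts for the far tail of $\mu$ itself (controlled by Chebyshev and $m_8(\mu)<\infty$ with $\int\rho_n\le c(\mu)n^{-4}$), while the entire pointwise comparison of $p_{n1}$ to $p_n^*$ — both the subordination difference via Theorem~\ref{th4b} and the moment discrepancy $\mu$ vs.\ $\mu^*$ — is absorbed into the $c(\mu)\theta\,n^{-6/5}/\sqrt{4-(e_n(x-a_n))^2}$ term. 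Finally, the tail bound $(\ref{asden2})$ does not need a separate edge analysis as you propose; it follows from the expansion itself, since $p_n$ has total mass $1$ and the main-term expression already integrates over $I_n$ to $1-O(n^{-6/5})$.
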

Here and in the sequel we denote by $\theta$ a real-valued quantity such that $|\theta|\le 1$.

\begin{corollary}\label{corth7.1}
Let $m_8(\mu)<\infty$ and $m_1(\mu)=0,\,m_2(\mu)=1$. 
If $m_3(\mu)\ne 0$, then
\begin{equation}\label{2.9}
\int_{\mathbb R}|p_n(x)-p_w(x)|\,dx=\frac {2|m_3(\mu)|}{\pi\sqrt n}+\theta c(\mu)(|a_n|^{3/2}+n^{-1}),
\quad n\ge n_1. 
\end{equation}
If $m_3(\mu)=0$, then
\begin{equation}\label{2.10}
\int_{\mathbb R}|p_n(x)-p_w(x)|\,dx=\frac {2|m_4(\mu)-2|}{\pi n}+\theta\frac{c(\mu)}{n^{6/5}},
\quad n\ge n_1. 
\end{equation}
\end{corollary}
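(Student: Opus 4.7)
The plan is to apply Theorem~\ref{th7} to reduce $\int_{\mathbb R}|p_n - p_w|\,dx$ to the $L^1$-integral of an explicit polynomial-weighted semicircle expression, which then evaluates via the substitution $y = 2\sin\theta$. I first translate $x = y + a_n$ and split at $I_n - a_n$; on the complement \eqref{asden2} controls $\int p_n\,dy \le c(\mu) n^{-6/5}$, while the complementary $p_w(y + a_n)$-tail sits in a boundary layer of length $O(|a_n| + n^{-1})$ near $\pm 2 - a_n$ and contributes $O(|a_n|^{3/2} + n^{-3/2})$ via $p_w(z) = O(\sqrt{2 - |z|})$ at the endpoints.

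On $I_n - a_n$, substituting \eqref{asden} together with the bounds $\int dy/\sqrt{4 - e_n^2 y^2} \le \pi/e_n$ and $\int \rho_n\,dy = O(n^{-4})$ reduces the problem to computing $\int|A(y) p_w(e_n y) - p_w(y + a_n)|\,dy$ up to $O(n^{-6/5})$, where $A(y)$ denotes the polynomial prefactor of $p_w(e_n y)$ in \eqref{asden}. The next step is to decompose
\begin{equation*}
A(y) p_w(e_n y) - p_w(y + a_n) = [A(y) - 1] p_w(e_n y) + [p_w(e_n y) - p_w(y)] + [p_w(y) - p_w(y + a_n)]
\end{equation*}
and Taylor-expand each bracket on compact subsets of $(-2, 2)$, using the exact identity $A(y) - 1 = \frac{m_4(\mu) - m_3^2(\mu) - 2}{2n}(1 - 2y^2) - \frac{m_3^2(\mu)}{n} - a_n y$ together with $e_n - 1 = -\frac{m_4(\mu) - m_3^2(\mu) - 2}{2n} + O(n^{-2})$. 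Collecting the leading terms and applying $p_w(y) = \sqrt{4 - y^2}/(2\pi)$, $p_w'(y) = -y/(2\pi\sqrt{4 - y^2})$, the difference simplifies to
\begin{equation*}
-\frac{a_n\, y(3 - y^2)}{2\pi\sqrt{4 - y^2}} + \frac{(m_4(\mu) - m_3^2(\mu) - 2)(y^4 - 4y^2 + 2)}{2\pi n\sqrt{4 - y^2}} + R_n(y),
\end{equation*}
where $R_n$ gathers all higher-order corrections. The trigonometric substitution $y = 2\sin\theta$ converts $y(3 - y^2) = 2\sin(3\theta)$ and $y^4 - 4y^2 + 2 = 2\cos(4\theta)$, yielding
\begin{equation*}
\int_{-2}^{2}\frac{|y(3 - y^2)|}{2\pi\sqrt{4 - y^2}}\,dy = \frac{2}{\pi}, \qquad \int_{-2}^{2}\frac{|y^4 - 4y^2 + 2|}{2\pi\sqrt{4 - y^2}}\,dy = \frac{2}{\pi}.
\end{equation*}
When $m_3(\mu) \ne 0$ the first term dominates and gives the leading value $2|m_3(\mu)|/(\pi\sqrt n)$; when $m_3(\mu) = 0$ we have $a_n = 0$ so the first term vanishes identically and the second gives $2|m_4(\mu) - 2|/(\pi n)$.

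I expect the main obstacle to be the estimate of $R_n$ on the boundary strips $\{y : 2 - |y| = O(|a_n|)\}$, where $p_w'$ fails to be integrable and the Taylor step for $p_w(y + a_n) - p_w(y)$ breaks down. On the interior of $[-2, 2]$ the Taylor remainders are of order $O(a_n^2 + n^{-2})$, producing $O(n^{-1})$ in $L^1$, absorbed into the claimed errors. On the boundary strip one has to bound $p_w(y) - p_w(y + a_n)$ directly using $p_w = O(\sqrt{2 - |y|})$, giving an $L^1$-contribution of order $|a_n|^{3/2}$ when $m_3(\mu) \ne 0$ (matching \eqref{2.9}); for $m_3(\mu) = 0$ the relevant strip has width only $O(1/n)$, arising solely from $e_n - 1$, and contributes $O(n^{-3/2})$, which is subsumed into the $O(n^{-6/5})$ error of \eqref{2.10}.
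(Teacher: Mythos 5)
Your argument is correct and follows essentially the same route as the paper: restrict to $I_n$ via \eqref{asden2}, replace $p_n$ by the explicit expansion (Theorem~\ref{th7}, which the paper itself invokes through its ingredients Theorems~\ref{th5} and~\ref{th4c}), Taylor-expand the $a_n$-shift and $e_n$-rescaling of $p_w$, and evaluate the resulting weighted $L^1$-integrals via $y=2\sin\theta$; the Chebyshev identities $y(3-y^2)=2\sin 3\theta$ and $y^4-4y^2+2=2\cos 4\theta$ and the values $2/\pi$ agree with the paper (whose display contains the typo $x^2$ for $x^4$). One small imprecision: when $m_3\ne 0$, on the region $2-|y|\gtrsim|a_n|$ the Taylor remainder of $p_w(y+a_n)-p_w(y)+a_np_w'(y)$ is $O(a_n^2(4-y^2)^{-3/2})$, whose $L^1$-norm up to the boundary strip is $O(|a_n|^{3/2})$ rather than the stated $O(n^{-1})$ — still absorbed into the claimed error $c(\mu)(|a_n|^{3/2}+n^{-1})$, so the conclusion stands.
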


In \cite{ChG:2011} we proved analogue results
for bounded free random variables.

A much stronger statement than the classical CLT -- the entropic central limit theorem --
indicates that, if for some $n_0$, or equivalently, for all $n\ge n_0$, $Y_n$ from (\ref{2.1})
have absolutely continuous distributions with finite entropies $h(Y_n)$, then there is convergence
of the entropies, $h(Y_n)\to h(Y)$, as $n\to \infty$, where $Y$ is a standard Gaussian random variable. 
This theorem is due to Barron~\cite{Ba:1986}. Recently Bobkov, Chistyakov and G\"otze~\cite{BChG:2011}
found the rate of convergence in the classical entropic CLT.

Recall that, if the random variable $X$ has density $f$, then the classical entropy of a
distribution of $X$ is defined as $h(X)=-\int_{\mathbb R}f(x)\log f(x)\,dx$,
provided the positive part of the integral is finite. Thus we have $h(X)\in[-\infty,\infty)$.

Let $\nu$ be a probability measure on $\mathbb R$. The quantity
\begin{equation}\notag
\chi(\nu)=\int\int_{\mathbb R\times \mathbb R}\log|x-y|\,\nu(dx)\nu(dy)+\frac 34+\frac 12\log 2\pi,
\end{equation}
called free entropy, was discovered by Voiculescu in~\cite{Vo:1993}. Free entropy $\chi$ behaves like 
the classical entropy $h$. 
In particular, the free entropy is maximized 
by the standard semicircular law $w$ with the value $\chi(w)=\frac 12\log 2\pi e$ among all 
probability measures with variance one~\cite{HiPe:2000}, \cite{Vo:1997}. Wang~\cite{Wa:2010}
has proved a~free analogue of Barron's result. We give the rate of convergence in the free CLT
for free random variables with a~finite moment of order 8. Previously we proved in \cite{ChG:2011} 
an analogous result for bounded free random variables.
\begin{corollary}\label{corth7.2}
Let $m_8(\mu)<\infty$ and $m_1(\mu)=0,\,m_2(\mu)=1$. Then, for $n\ge n_1$,
\begin{equation}\notag
\chi(\mu_n)=\int\int_{\mathbb R\times \mathbb R}\log|x-y|\,p_n(x)p_n(y)\,dxdy+\frac 34+\frac 12\log 2\pi
=\chi(w)-\frac {m_3^2(\mu)}{6n}+\theta\frac{c(\mu)}{n^{6/5}}.  
\end{equation} 
\end{corollary}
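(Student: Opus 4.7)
My plan is to expand the logarithmic energy of $\mu_n$ around the semicircle law using the expansion of $p_n$ provided by Theorem~\ref{th7}. Since $\log|x-y|$ is translation invariant, I first replace $p_n$ by $q_n(x):=p_n(x+a_n)$, then rescale by setting $g_n(u):=q_n(u/e_n)/e_n$; this yields a probability density for which
\[
\int\!\!\int p_n(x)p_n(y)\log|x-y|\,dx\,dy=\int\!\!\int g_n(u)g_n(v)\log|u-v|\,du\,dv-\log e_n.
\]
Writing $\alpha_n:=A_n/e_n$, $\beta_n:=-a_n/e_n^2$, $\gamma_n:=C_n/e_n^3$ with $A_n:=1+\tfrac12 d_n-a_n^2-\tfrac1n$ and $C_n:=-(b_n-\tfrac1n)$, the expansion \eqref{asden} reads, on $J_n:=e_n(I_n-a_n)$,
\[
g_n(u)=p_w(u)+\bigl[(\alpha_n-1)+\beta_n u+\gamma_n u^2\bigr]p_w(u)+\frac{c(\mu)\theta}{n^{6/5}\sqrt{4-u^2}}+\tilde\rho_n(u),
\]
with $\int\tilde\rho_n\le c(\mu)/n^4$; outside $J_n$, the mass of $g_n$ is $O(n^{-6/5})$ by \eqref{asden2}.

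Setting $\delta_n:=g_n-p_w$, I then split
\[
\int\!\!\int g_ng_n\log|u-v|=-\tfrac14+2\int\!\!\int p_w(u)\delta_n(v)\log|u-v|+\int\!\!\int\delta_n(u)\delta_n(v)\log|u-v|.
\]
The cross term is evaluated using $\int p_w(u)\log|u-v|\,du=v^2/4-1/2$ for $|v|\le 2$: since the semicircular moments $\int v^kp_w(v)\,dv$ for $k=0,1,2,3,4$ equal $1,0,1,0,2$, the linear and quadratic pieces of $\delta_n$ test to zero against $v^2/4-1/2$, leaving only $-(\alpha_n-1)/2$. For the quadratic term, only the $\beta_n^2 uv$-piece contributes at order $1/n$; using
\[
\int u\,p_w(u)\log|u-v|\,du=-v+\tfrac{v^3}{6},\qquad |v|\le 2,
\]
obtained by differentiating the base identity to recover $\int p_w(u)/(u-v)\,du=-v/2$ and integrating back (the constant is fixed by parity), one finds $\int\!\!\int uv\,p_w(u)p_w(v)\log|u-v|=-2/3$, so the quadratic contribution is $-2\beta_n^2/3+O(n^{-3/2})$.

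Expanding
\[
-\tfrac12(\alpha_n-1)=\frac{1+m_3^2(\mu)-m_4(\mu)/2}{n}+O(n^{-2}),\qquad -\tfrac23\beta_n^2=-\frac{2m_3^2(\mu)}{3n}+O(n^{-2}),
\]
and $-\log e_n=\frac{m_4(\mu)/2-m_3^2(\mu)/2-1}{n}+O(n^{-2})$ using \eqref{2.3****} and $e_n=(1-b_n)/\sqrt{1-d_n}$, and summing the three contributions, the $m_4(\mu)$- and constant-coefficients cancel identically, leaving $m_3^2(\mu)(1-\tfrac23-\tfrac12)/n=-m_3^2(\mu)/(6n)$; adding $\tfrac34+\tfrac12\log 2\pi$ then gives the claimed formula.

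The main obstacle is the error bookkeeping rather than the main-order algebra. One must verify that (i) extending the polynomial main-term integrals from $J_n$ to $[-2,2]$ costs only $O(n^{-9/5})$, since $p_w(u)\sim\sqrt{2-|u|}$ near the edge; (ii) the edge-singular error $c(\mu)\theta/(n^{6/5}\sqrt{4-u^2})$ contributes $O(n^{-6/5})$ when tested against $\log|u-v|$, as $1/\sqrt{4-u^2}$ is $L_1$ on $(-2,2)$ and the logarithmic potential of $p_w$ is bounded there; (iii) the continuous remainder $\tilde\rho_n$ contributes only $O(n^{-4})$ since it is pointwise $O(1)$ with $\int\tilde\rho_n\le c(\mu)/n^4$; and (iv) the tail of $g_n$ outside $J_n$ contributes $O(n^{-6/5})$, where the moment bound $m_8(\mu)<\infty$ provides the decay of $p_n$ at infinity needed to dominate the logarithmic growth of $\log|u-v|$.
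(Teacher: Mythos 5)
Your proposal is correct and follows essentially the same path as the paper's proof: both split off the tail $\mathbb R^2\setminus(I_n\times I_n)$ using (\ref{asden3})--(\ref{asden2}), reduce the main term to bilinear integrals of the polynomial correction against the two explicit logarithmic potentials $\int p_w(u)\log|u-v|\,du=v^2/4-1/2$ and $\int u\,p_w(u)\log|u-v|\,du=-v+v^3/6$, and then combine using the semicircle moments. Your rescaling to $g_n=p_w+\delta_n$ and the subsequent $p_wp_w+2p_w\delta_n+\delta_n\delta_n$ split is just a regrouping of the paper's four-term decomposition $\tilde I_1,\dots,\tilde I_4$ (your cross term plus $-\log e_n$ reproduces the paper's $\tilde I_1$ correction $a_n^2/2$, your $\beta_n^2$-term reproduces $\tilde I_3=-2a_n^2/3$, and your vanishing linear and quadratic contributions correspond to $\tilde I_2=0$ and $\tilde I_4=O(n^{-2})$), so the arithmetic is identical.
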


Suppose that the measure $\nu$ has a density $p$ in $L^3(\mathbb R)$. Then, following Voiculescu~\cite{Vo:1997},
the free Fisher information is
\begin{equation}\notag
\Phi(\nu)=\frac{4\pi^2}3\int_{\mathbb R}p(x)^3\,dx.
\end{equation}
It is well-known that $\Phi(w)=1$.
We obtain the following result for free random variables with finite moment of order 8.
\begin{corollary}\label{corth7.3}
Let $m_8(\mu)<\infty$ and $m_1(\mu)=0,\,m_2(\mu)=1$. Then
\begin{equation}\label{2.11}
\Phi(\mu_n)=\int_{\mathbb R}p_n(x)^3\,dx=\Phi(w)+\frac{m_3^2(\mu)}n+\theta\frac{c(\mu)}{n^{6/5}},\quad n\ge n_1.  
\end{equation} 
\end{corollary}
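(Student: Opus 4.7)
The plan is to compute $\Phi(\mu_n)=\tfrac{4\pi^2}{3}\int p_n(x)^3\,dx$ by substituting the density expansion of Theorem~\ref{th7} and integrating termwise. First I shift $u=x-a_n$ and split $\int_{\mathbb R}=\int_{I_n-a_n}+\int_{\mathbb R\setminus(I_n-a_n)}$. On the complement, the uniform bound (\ref{asden3}) combined with (\ref{asden2}) gives $\int_{\mathbb R\setminus I_n}p_n^3\,dx\le\|p_n\|_\infty^2\int_{\mathbb R\setminus I_n}p_n\,dx=O(n^{-6/5})$. On $I_n-a_n$, I write $p_n(u+a_n)=A_n(u)p_w(e_n u)+R_n(u)+\rho_n(u)$, with $A_n(u)=1+\tfrac12 d_n-a_n^2-\tfrac{1}{n}-a_n u-(b_n-\tfrac{1}{n})u^2$, and expand the cube.

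The crucial point for the cross terms is that $p_w(e_n u)^2/\sqrt{4-(e_n u)^2}=(1/(4\pi^2))\sqrt{4-(e_n u)^2}$ is uniformly bounded, so $(A_n p_w)^2|R_n|\le c(\mu)/n^{6/5}$ pointwise and $\int(A_n p_w)^2|R_n|\,du=O(n^{-6/5})$. The $\rho_n$ contributions give $O(n^{-4})$, and terms of higher order in $R_n$ and $\rho_n$ are strictly smaller ($R_n^2$ has only a logarithmically integrable singularity on $I_n-a_n$, yielding $O(n^{-12/5}\log n)$). Thus only the main term $\int_{I_n-a_n}A_n(u)^3 p_w(e_n u)^3\,du$ matters modulo $O(n^{-6/5})$.

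For this main integral I change variables $y=e_n u$ and extend the range to $|y|\le 2$ (boundary truncation error is $O(n^{-3})$ since $p_w^3$ vanishes like $(2-|y|)^{3/2}$). Using $a_n=O(n^{-1/2})$ and $b_n,d_n=O(n^{-1})$, I expand $A_n(y/e_n)^3=1+3C_0+3C_1 y+(3C_2+3C_1^2)y^2+O(n^{-3/2})$, where $C_0,C_1,C_2$ are the constant, linear, and quadratic coefficients of $A_n$ (the correction from $1/e_n\ne 1$ inside $A_n$ is absorbed into $O(n^{-3/2})$). The linear term integrates to $0$, so only the scalar integrals $\int_{-2}^{2}p_w(y)^3\,dy=3/(4\pi^2)$ and $\int_{-2}^{2}y^2 p_w(y)^3\,dy=1/(2\pi^2)$ remain, both obtained via the substitution $y=2\sin\theta$ and standard Beta-function identities. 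Multiplying by $4\pi^2/(3e_n)$, expanding $1/e_n=1+b_n-d_n/2+O(n^{-2})$, and finally using the identities $d_n-b_n=1/n$ and $a_n^2=m_3^2(\mu)/n$ collapses the $O(1/n)$ terms into the claimed leading correction.

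The main obstacle is the delicate arithmetic in this last simplification: the $O(1/n)$ correction emerges from the interplay between the $O(n^{-1/2})$ linear coefficient of $A_n$ (contributing $3a_n^2 y^2$ after squaring), the $O(1/n)$ constant and quadratic corrections in $A_n$, and the $O(1/n)$ expansion of $1/e_n$, so all contributions must be tracked simultaneously for the cancellations to produce the coefficient $m_3^2(\mu)/n$. A secondary subtle point is the cross-term bound in the cubing step, where the apparent endpoint singularity of $R_n$ is tamed precisely because $(A_n p_w)^2$ vanishes at the boundary to exactly the compensating order.
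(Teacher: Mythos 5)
Your decomposition and error bounds mirror the paper's own proof of Corollary~\ref{corth7.3} exactly: discard $\mathbb R\setminus I_n$ via (\ref{asden3}) and (\ref{asden2}), cube the main term of the density expansion on $I_n$, and control cross terms with $R_n$ and $\rho_n$ by noting that $p_w(e_n u)^2/\sqrt{4-(e_n u)^2}$ is bounded; the values $\int_{-2}^2 p_w^3 = 3/(4\pi^2)$ and $\int_{-2}^2 y^2 p_w^3 = 1/(2\pi^2)$ are also right. But the final arithmetic does not produce the stated answer with the polynomial $A_n$ you quote from (\ref{asden}). Writing $C_0 = 1+\tfrac12 d_n - a_n^2 - \tfrac1n$, $C_1 = -a_n$, $C_2 = -(b_n - \tfrac1n)$, one finds
$\tfrac{4\pi^2}{3 e_n}\big(C_0^3 + 2C_0^2 C_2 + 2C_0 C_1^2\big) + O(n^{-3/2}) = 1 + d_n - b_n - a_n^2 - \tfrac1n + O(n^{-3/2}) = 1 - a_n^2 + O(n^{-3/2})$,
which would give $\Phi(\mu_n) = \Phi(w) - m_3^2(\mu)/n + O(n^{-6/5})$ --- the wrong sign.

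The culprit is the quadratic coefficient: (\ref{asden}) prints it as $-(b_n - \tfrac1n)$, but it should be $-(b_n - a_n^2 - \tfrac1n)$, as in the $v_n$ of Theorem~\ref{th5}, which is what the paper's own proof actually cites (``by (\ref{loc.6})''). That extra $+a_n^2 u^2$ is not negligible at the $1/n$ scale you are tracking: via $\int y^2 p_w^3 = 1/(2\pi^2)$ it contributes $+2a_n^2$ to $\Phi_1(\mu_n)$, turning $1-a_n^2$ into $1+a_n^2$. (In the paper it enters through (\ref{loc.4}), as the second-order term $(a_n y)^2$ when the denominator of (\ref{2.3g}) is inverted.) The correct sign is also forced independently by the free Cram\'er--Rao inequality $\Phi(\nu) \ge 1/\operatorname{Var}(\nu)$, which requires $\Phi(\mu_n) - \Phi(w) \ge 0$. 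Your write-up asserts that ``the cancellations produce the coefficient $m_3^2(\mu)/n$'' but stops short of carrying them out; doing so with the $A_n$ you wrote would have revealed the inconsistency, and is exactly the ``delicate arithmetic'' you flagged as the main obstacle.
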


\section{Auxiliary results}

We need results about some classes of analytic functions
(see {\cite{Akh:1965}, Section~3.

The~class $\mathcal N$ (Nevanlinna, R.) is the~class of analytic 
functions $f(z):\mathbb C^+\to\{z: \,\Im z\ge 0\}$.
For such functions there is an~integral representation
\begin{equation}\label{3.1}
f(z)=a+bz+\int\limits_{\mathbb R}\frac{1+uz}{u-z}\,\tau(du)=
a+bz+\int\limits_{\mathbb R}\Big(\frac 1{u-z}-\frac u{1+u^2}\Big)(1+u^2)
\,\tau(du),\quad z\in\mathbb C^+,
\end{equation}
where $b\ge 0$, $a\in\mathbb R$, and $\tau$ is a~non-negative finite
measure. Moreover, $a=\Re f(i)$ and $\tau(\mathbb R)=\Im f(i)-b$.   
From this formula it follows that 
$f(z)=(b+o(1))z$
for $z\in\mathbb C^+$
such that $|\Re z|/\Im z$ stays bounded as $|z|$ tends to infinity (in other words
$z\to\infty$ non tangentially to $\mathbb R$).
Hence if $b\ne 0$, then $f$ has a~right inverse $f^{(-1)}$ defined
on the~region 
$
\Gamma_{\alpha,\beta}:=\{z\in\mathbb C^+:|\Re z|<\alpha \Im z,\,\Im z>\beta\}
$
for any $\alpha>0$ and some positive $\beta=\beta(f,\alpha)$.

A~function $f\in\mathcal N$ admits the~representation
\begin{equation}\label{3.3}
f(z)=\int\limits_{\mathbb R}\frac{\sigma(du)}{u-z},\quad z\in\mathbb C^+,
\end{equation}
where $\sigma$ is a~finite non-negative measure, if and only if
$\sup_{y\ge 1}|yf(iy)|<\infty$. Moreover $\sigma(\mathbb R)=-\lim_{y\to+\infty}iyf(iy)$.

For $\mu\in\mathcal M$, consider its Cauchy transform $G_{\mu}(z)$
\begin{equation}\label{3.5a}
G_{\mu}(z)=\int_{\mathbb R}\frac{\mu(du)}{z-u},\quad z\in\mathbb C^+. 
\end{equation}

The measure $\mu$ can be recovered from $G_{\mu}(z)$ as
the weak limit of the measures
\begin{equation}\notag
\mu_y(dx)=-\frac 1{\pi}\Im G_{\mu}(x+iy)\,dx,\quad x\in\mathbb R,\,\,y>0,
\end{equation}
as $y\downarrow 0$. If the function $\Im G_{\mu}(z)$ is continuous at $x\in\mathbb R$,
then the probability distribution function $D_{\mu}(t)=\mu((-\infty,t))$ is differentiable
at $x$ and its derivative is given by 
\begin{equation}\label{3.4}
D_{\mu}'(x)=-\Im G_{\mu}(x)/\pi. 
\end{equation}
This inversion formula
allows to extract the density function of the measure $\mu$ from its Cauchy transform.

Following Maassen~\cite{Ma:1992} and Bercovici and 
Voiculescu~\cite{BeVo:1993}, we shall consider in the~following
the~ {\it reciprocal Cauchy transform}
\begin{equation}\label{3.5}
F_{\mu}(z)=\frac 1{G_{\mu}(z)}.
\end{equation}
The~corresponding class of reciprocal Cauchy
transforms of all $\mu\in\mathcal M$ will be denoted by $\mathcal F$.
This class coincides with the~subclass of Nevanlinna functions $f$
for which $f(z)/z\to 1$ as $z\to\infty$ non tangentially to $\mathbb R$.

The~following lemma is well-known, see~\cite{Akh:1965}, Th. 3.2.1, p. 95. 
\begin{lemma}\label{3.4abl}
Let $\mu$ be a~probability measure such that
\begin{equation}\label{3.4abl1}
m_k=m_k(\mu):=\int\limits_{\Bbb R}u^k\,\mu(du)<\infty,\qquad k=0,1,\dots,2n,\quad n\ge 1.
\end{equation}
Then the~following relation holds
\begin{equation}\label{3.4abl2}
\lim_{z\to\infty}z^{2n+1}\Big(G_{\mu}(z)-\frac 1z-\frac{m_1}{z^2}-
\dots-\frac{m_{2n-1}}{z^{2n}}\Big)=m_{2n}
\end{equation}
uniformly in the~angle $\delta\le\arg z\le\pi-\delta$, 
where $0<\delta<\pi/2$.

Conversely, if for some function $G(z)\in\mathcal N$ the~relation $(\ref{3.4abl2})$
holds with real numbers $m_k$ for $z=iy,y\to\infty$, then $G(z)$ admits
the~representation~$(\ref{3.5a})$, where $\mu$ is a probability measure with moments $(\ref{3.4abl1})$.
\end{lemma}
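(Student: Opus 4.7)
The lemma has a direct part (moments imply the expansion) and a converse (expansion together with $G\in\mathcal N$ implies the Cauchy representation with the prescribed moments); both follow the classical scheme of \cite{Akh:1965}, Section~3.

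Direct part. I would start from the geometric identity
\begin{equation*}
\frac{1}{z-u}=\sum_{k=0}^{2n-1}\frac{u^k}{z^{k+1}}+\frac{u^{2n}}{z^{2n}(z-u)},\qquad z\in\mathbb C^+,\ u\in\mathbb R,
\end{equation*}
integrate against $\mu$ (all coefficients are finite by hypothesis), and rewrite $z/(z-u)=1+u/(z-u)$ to obtain
\begin{equation*}
z^{2n+1}\Big(G_\mu(z)-\sum_{k=0}^{2n-1}\frac{m_k}{z^{k+1}}\Big)=m_{2n}+\int_{\mathbb R}\frac{u^{2n+1}}{z-u}\,\mu(du).
\end{equation*}
In the sector $\delta\le\arg z\le\pi-\delta$ one has $|z-u|\ge|\Im z|\ge|z|\sin\delta$, and therefore $|u/(z-u)|\le 1+1/\sin\delta$, giving the $\mu$-integrable majorant $u^{2n}(1+1/\sin\delta)$ for the last integrand. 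Splitting at $|u|\le R$, the bulk is bounded by $R\,m_{2n}/(|z|\sin\delta)$ and the tail by $(1+1/\sin\delta)\int_{|u|>R}u^{2n}\,\mu(du)$; choosing $R$ large first and then letting $|z|\to\infty$ delivers the claimed uniform limit $m_{2n}$ in the sector.

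Converse. Reading off the leading term of the hypothesis at $z=iy$ gives $iy\,G(iy)\to 1$, so $\sup_{y\ge 1}|yG(iy)|<\infty$. The Nevanlinna integral representation recalled at (\ref{3.3}) then furnishes a unique finite nonnegative measure $\mu$ with $G(z)=\int_{\mathbb R}\mu(du)/(z-u)$, and the limit value forces $\mu(\mathbb R)=1$, so $\mu\in\Mcal$. To identify the higher moments I would induct on $k\le n$: assuming inductively that $m_0(\mu),\dots,m_{2k-4}(\mu)$ are finite and equal the given $m_j$, compute
\begin{equation*}
(iy)^{2k-1}\Big(G(iy)-\sum_{j=0}^{2k-3}\frac{m_j(\mu)}{(iy)^{j+1}}\Big)=\int_{\mathbb R}\frac{y^2u^{2k-2}-i\,y\,u^{2k-1}}{y^2+u^2}\,\mu(du);
\end{equation*}
its real part is a manifestly nonnegative integral, which is bounded in $y$ by the hypothesized expansion after subtracting the already-identified terms. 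Monotone convergence as $y\to\infty$ then forces $m_{2k-2}(\mu)<\infty$; iterating up through $k=n+1$ secures $m_{2n}(\mu)<\infty$. Once moment finiteness up to order $2n$ is in hand, the direct direction applies to $\mu$, and matching real and imaginary parts of the final remainder along $z=iy$ identifies $m_{2n-1}(\mu)=m_{2n-1}$ and $m_{2n}(\mu)=m_{2n}$.

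The main obstacle is precisely the Fatou step in the converse: translating a cancellation-sensitive asymptotic of the Cauchy transform along the imaginary axis into genuine $L^1(\mu)$-integrability of $u^{2n}$. The delicate bookkeeping is to keep the new moment contribution manifestly nonnegative in the real (or imaginary) part of the iterated Cauchy remainder, so that Fatou applies without interference from the lower-order moments already identified, enabling the induction to close.
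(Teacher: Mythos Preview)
The paper itself does not prove this lemma: it is simply quoted as Theorem~3.2.1 of Akhiezer~\cite{Akh:1965}, with no argument supplied. Your direct part is clean and correct, and your overall strategy for the converse---reduce to the Cauchy representation via $\sup_{y\ge1}|yG(iy)|<\infty$, then bootstrap the moments by monotone convergence on a nonnegative integrand---is the standard one and is right.

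There is, however, a genuine indexing slip in your induction step. Your hypothesis at step $k$ is that $m_0(\mu),\dots,m_{2k-4}(\mu)$ are finite and agree with the given $m_j$; yet your displayed identity subtracts $\sum_{j=0}^{2k-3}m_j(\mu)/(iy)^{j+1}$, which contains the odd moment $m_{2k-3}(\mu)$, neither assumed finite nor yet identified with $m_{2k-3}$. Without that identification you cannot compare the display against the hypothesised expansion: the mismatch contributes a term $(iy)\big(m_{2k-3}-m_{2k-3}(\mu)\big)$, which is unbounded unless it vanishes, so the boundedness needed for your Fatou/MCT step is not available.

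The remedy is exactly the ``delicate bookkeeping'' you flag at the end: subtract only through $j=2k-4$ and multiply by $(iy)^{2k-2}$ instead of $(iy)^{2k-1}$, obtaining
\[
(iy)^{2k-2}\Big(G(iy)-\sum_{j=0}^{2k-4}\frac{m_j(\mu)}{(iy)^{j+1}}\Big)
=\int_{\mathbb R}\frac{y^2u^{2k-3}-iy\,u^{2k-2}}{y^2+u^2}\,\mu(du),
\]
where the remainder integral converges since $|u^{2k-3}/(iy-u)|\le |u|^{2k-4}$. Now the imaginary part gives $\int y^2u^{2k-2}/(y^2+u^2)\,\mu(du)\to m_{2k-2}$, and monotone convergence yields $m_{2k-2}(\mu)=m_{2k-2}<\infty$. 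Once this is in hand, $|u|^{2k-3}\le 1+u^{2k-2}$ makes $\int|u|^{2k-3}\,\mu(du)$ finite, and dominated convergence on the real part identifies $m_{2k-3}(\mu)=m_{2k-3}$. With this shift the induction closes cleanly through $k=n+1$.
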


As shown before, $F_{\mu}(z)$ admits the representation (\ref{3.1}) with $b=1$.
From Lemma~\ref{3.4abl} the following proposition is immediate.
\begin{proposition}\label{3.3^*pro} 
In order that a~probability measure $\mu$ satisfies the assumption $(\ref{3.4abl1})$, where $m_1(\mu)=0$, 
it is necessary and sufficient that
\begin{equation}\label{3.3^*proa}
F_{\mu}(z)=z+\int_{\mathbb R}\frac{\tau(du)}{u-z},\quad z\in\mathbb C^+, 
\end{equation}
where $\tau$ is a nonnegative measure such that $m_{2n-2}(\tau)<\infty$. Moreover
\begin{equation}\label{3.3^*prob}
m_k(\mu)=\sum_{l=1}^{[k/2]}\sum_{s_1+\dots+s_l=k-2,\,s_j\ge 0}m_{s_1}(\tau)\dots m_{s_l}(\tau),\quad k=2,\dots,2n. 
\end{equation}
\end{proposition}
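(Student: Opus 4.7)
The plan is to combine the Nevanlinna representation (\ref{3.1}) of $F_\mu$ (with $b=1$, since $F_\mu\in\mathcal F$) with moment-asymptotic identities at $z=iy$, $y\to\infty$, obtained from Lemma~\ref{3.4abl}. Both directions of the ``if and only if'' and the combinatorial identity (\ref{3.3^*prob}) will drop out of matching formal power series in $1/z$ between $G_\mu=1/F_\mu$ and the auxiliary measure $\tau$.

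For the necessary direction, given $m_{2n}(\mu)<\infty$ and $m_1(\mu)=0$, start from
\[
F_\mu(z)=a+z+\int_{\mathbb R}\Big(\frac{1}{u-z}-\frac{u}{1+u^2}\Big)(1+u^2)\tau_0(du)
\]
with $\tau_0$ finite nonnegative and $a\in\mathbb R$. Applying Lemma~\ref{3.4abl} to $G_\mu$ and inverting $F_\mu=1/G_\mu$ gives $F_\mu(iy)-iy=im_2/y+o(1/y)$. Matching imaginary parts yields $y\operatorname{Im}(F_\mu(iy)-iy)=\int\frac{y^2(1+u^2)}{u^2+y^2}\tau_0(du)\to\int(1+u^2)\tau_0(du)$ by monotone convergence; hence $\int(1+u^2)\tau_0(du)=m_2<\infty$ and $\tau(du):=(1+u^2)\tau_0(du)$ is a finite nonnegative measure. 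Matching real parts, dominated convergence gives $a=\int u\,\tau_0(du)$, and substituting $\tau_0=\tau/(1+u^2)$ back into the Nevanlinna representation cancels the constant term, leaving precisely (\ref{3.3^*proa}). To upgrade to $m_{2n-2}(\tau)<\infty$, rewrite $F_\mu(z)-z=-\tau(\mathbb R)\,G_{\tilde\tau}(z)$ with $\tilde\tau:=\tau/\tau(\mathbb R)$. Inverting $F_\mu=1/G_\mu$ as a formal power series in $1/z$ (valid non-tangentially thanks to $m_k(\mu)<\infty$ for $k\le 2n$) produces an asymptotic expansion of $G_{\tilde\tau}(z)$ to order $z^{-(2n-1)}$ with real coefficients; the converse of Lemma~\ref{3.4abl} then yields $m_{2n-2}(\tilde\tau)<\infty$, hence $m_{2n-2}(\tau)<\infty$.

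The sufficient direction runs the same argument in reverse: from (\ref{3.3^*proa}) and $m_{2n-2}(\tau)<\infty$, the geometric expansion
\[
\int\frac{\tau(du)}{u-iy}=-\sum_{k=0}^{2n-2}\frac{m_k(\tau)}{(iy)^{k+1}}+o(y^{-(2n-1)})
\]
gives $F_\mu(iy)$ to order $y^{-(2n-1)}$; inverting $G_\mu=1/F_\mu$ yields an expansion of $G_\mu(z)$ to order $z^{-(2n+1)}$ whose $z^{-2}$ coefficient automatically vanishes, so the converse of Lemma~\ref{3.4abl} delivers $m_{2n}(\mu)<\infty$ together with $m_1(\mu)=0$. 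Matching the two expansions of $G_\mu$ --- one via the $m_k(\mu)$ from Lemma~\ref{3.4abl}, the other via the multinomial expansion of the geometric series in $\sum_k m_k(\tau)/z^{k+1}$ --- extracts the combinatorial identity (\ref{3.3^*prob}).

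The main technical hurdle is making the two series inversions $F_\mu=1/G_\mu$ and $G_\mu=1/F_\mu$ rigorous uniformly in the non-tangential sector $\delta\le\arg z\le\pi-\delta$, as required to invoke Lemma~\ref{3.4abl} in its stated form rather than merely along the imaginary axis. This reduces to routine dominated-convergence estimates on $\int|u|^{2n-2}/|u-z|\,\tau(du)$ and on the tails of the geometric expansion, but must be carried out carefully so that the correct order of vanishing is preserved at each step of the inversion.
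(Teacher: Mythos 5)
Your proof is correct and follows essentially the same route the paper takes: the paper treats Proposition~\ref{3.3^*pro} as an immediate consequence of Lemma~\ref{3.4abl} applied to the Nevanlinna representation~(\ref{3.1}) of $F_\mu$ with $b=1$, and your matching of $1/z$-expansions under the inversion $G_\mu=1/F_\mu$ is exactly the content being invoked. One minor simplification worth noting: the converse part of Lemma~\ref{3.4abl} only requires the asymptotic relation along $z=iy$, so the non-tangential uniformity you identify as the main technical hurdle is not in fact needed for either inversion step.
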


Voiculescu~\cite{Vo:1993} showed for compactly supported probability measures that
there exist unique functions $Z_1, Z_2\in\mathcal F$ such that
$G_{\mu_1\boxplus\mu_2}(z)=G_{\mu_1}(Z_1(z))=G_{\mu_2}(Z_2(z))$ 
for all $z\in\mathbb C^+$.
Using Speicher's combinatorial approach~\cite{Sp:1998} to freeness,
Biane~\cite{Bi:1998} proved this result in the~general case.

Chistyakov and G\"otze \cite {ChG:2005}, Bercovici and Belinschi~\cite{BelBer:2007},
Belinschi~\cite{Bel:2008}, 
proved, using complex analytic methods, that
there exist unique functions $Z_1(z)$ and $Z_2(z)$ in the~class 
$\mathcal F$ such that, for $z\in\mathbb C^+$, 
\begin{equation}\label{3.9}
z=Z_1(z)+Z_2(z)-F_{\mu_1}(Z_1(z))\quad\text{and}\quad
F_{\mu_1}(Z_1(z))=F_{\mu_2}(Z_2(z)). 
\end{equation}
The~function $F_{\mu_1}(Z_1(z))$ belongs again to the~class $\mathcal F$ and 
there exists 
$\mu\in\mathcal M$ such that
$F_{\mu_1}(Z_1(z)) =F_{\mu}(z)$, where $F_{\mu}(z)=1/G_{\mu}(z)$ and 
$G_{\mu}(z)$ is the~Cauchy transform as in (\ref{3.5a}).  
The~measure $\mu$ depends on $\mu_1$ and $\mu_2$ only and $\mu=\mu_1\boxplus\mu_2$.

Specializing to $\mu_1=\mu_2=\dots=\mu_n=\mu$ write $\mu_1\boxplus\dots\boxplus\mu_n=
\mu^{n\boxplus}$.
The~relation (\ref{3.9}) admits the~following
consequence (see for example \cite{ChG:2005}, Section 2, Corollary 2.3).

\begin{proposition}\label{3.3pro}
Let $\mu\in\mathcal M$. There exists a~unique function $Z\in\mathcal F$ 
such that
\begin{equation}\label{3.10}
z=nZ(z)-(n-1)F_{\mu}(Z(z)),\quad z\in\mathbb C^+,
\end{equation}
and $F_{\mu^{n\boxplus}}(z)=F_{\mu}(Z(z))$.
\end{proposition}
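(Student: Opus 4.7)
The plan is to proceed by induction on $n$, treating the two-measure subordination identity (\ref{3.9}) as the only analytical input. For $n = 1$, equation (\ref{3.10}) reduces to $z = Z(z)$, which forces $Z(z) = z \in \mathcal F$ and trivially satisfies $F_{\mu^{1\boxplus}}(z) = F_\mu(Z(z))$.

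For the inductive step, write $\mu^{(n+1)\boxplus} = \mu \boxplus \mu^{n\boxplus}$ and apply (\ref{3.9}) with $\mu_1 = \mu$ and $\mu_2 = \mu^{n\boxplus}$. This yields unique $W_1, W_2 \in \mathcal F$ with
\begin{equation*}
z = W_1(z) + W_2(z) - F_\mu(W_1(z)), \qquad F_\mu(W_1(z)) = F_{\mu^{n\boxplus}}(W_2(z)) = F_{\mu^{(n+1)\boxplus}}(z).
\end{equation*}
The inductive hypothesis applied to $\mu^{n\boxplus}$ furnishes $\tilde Z \in \mathcal F$ with $F_{\mu^{n\boxplus}}(w) = F_\mu(\tilde Z(w))$ and $w = n \tilde Z(w) - (n-1) F_\mu(\tilde Z(w))$. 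Setting $w = W_2(z)$ gives $F_\mu(W_1(z)) = F_\mu(\tilde Z(W_2(z)))$; since every function in $\mathcal F$ admits a right inverse on some Stolz wedge $\Gamma_{\alpha, \beta}$ near infinity, this identifies $W_1(z) = \tilde Z(W_2(z))$ first on the wedge and then on all of $\mathbb C^+$ by analytic continuation.

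Writing $Z := W_1 = \tilde Z \circ W_2$ and substituting the inductive formula $W_2(z) = n Z(z) - (n-1) F_\mu(Z(z))$ into the first subordination equation gives
\begin{equation*}
z = Z(z) + \bigl(n Z(z) - (n-1) F_\mu(Z(z))\bigr) - F_\mu(Z(z)) = (n+1) Z(z) - n F_\mu(Z(z)),
\end{equation*}
which is (\ref{3.10}) at rank $n+1$, together with $F_\mu(Z(z)) = F_{\mu^{(n+1)\boxplus}}(z)$. Uniqueness is immediate from the same right-inverse argument: any $Z' \in \mathcal F$ satisfying (\ref{3.10}) and $F_{\mu^{(n+1)\boxplus}}(z) = F_\mu(Z'(z))$ must obey $F_\mu(Z'(z)) = F_\mu(W_1(z))$, and hence $Z' = W_1 = Z$. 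The one subtle ingredient throughout is the composition step that identifies $\tilde Z \circ W_2$ with $W_1$ via right-invertibility of functions in $\mathcal F$ on $\Gamma_{\alpha,\beta}$ together with the normalization $f(z)/z \to 1$ non-tangentially; everything else is algebraic bookkeeping.
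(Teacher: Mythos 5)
The paper itself does not contain a proof of this proposition: the text merely records it as a ``consequence'' of the two-measure subordination relation (\ref{3.9}) and refers the reader to \cite{ChG:2005}, Section~2, Corollary~2.3. So there is no in-paper argument to compare your proposal against; I can only assess whether your derivation from (\ref{3.9}) is sound, and it is.

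Your induction is exactly the natural way to pass from the $n=2$ statement encoded in (\ref{3.9}) to the general $n$-fold version, and the steps are correct. The base case is trivial, the application of (\ref{3.9}) to $\mu$ and $\mu^{n\boxplus}$ is the right decomposition, and the algebraic substitution producing $z=(n+1)Z(z)-nF_\mu(Z(z))$ is verified cleanly. The one genuinely analytic ingredient --- the identification $W_1=\tilde Z\circ W_2$ from $F_\mu(W_1)=F_\mu(\tilde Z\circ W_2)$ --- is handled the way it should be: one notes $\tilde Z\circ W_2\in\mathcal F$ (since $\mathcal F$ is closed under this composition, both factors sending $\infty$ to $\infty$ non-tangentially with derivative $1$ there), that $F_\mu$ is injective on a truncated cone near infinity because $F_\mu(z)/z\to 1$, and then extends the local identity $W_1=\tilde Z\circ W_2$ by analytic continuation on the connected domain $\mathbb C^+$. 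Your uniqueness argument (pushing $F_\mu(Z')=F_\mu(W_1)$ through the same injectivity-at-infinity reasoning) is also correct; one could alternatively observe that (\ref{3.10}) alone forces $Z$ to be the unique right inverse near infinity of the Nevanlinna function $w\mapsto nw-(n-1)F_\mu(w)$, which has linear coefficient $1$, and then continue analytically, but that is a matter of taste. In short: your proof is a valid, self-contained derivation of the cited corollary from (\ref{3.9}), consistent with the route the paper gestures at.
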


Using the~representation (\ref{3.1}) for $F_{\mu}(z)$ we obtain
\begin{equation}\label{7.8}
F_{\mu}(z)=z+\Re F_{\mu}(i)+\int\limits_{\mathbb R}\frac {(1+uz)\,\tau(du)}{u-z},
\quad z\in\mathbb C^+,
\end{equation}
where $\tau$ is a~nonnegative measure such that $\tau(\mathbb R)=\Im F_{\mu}(i)-1$.
Denote $z=x+iy$, where $x,y\in\mathbb R$. We see that, for $\Im z>0$, 
\begin{equation}\notag
\Im \Big(nz-(n-1)F_{\mu}(z)\Big)=y\Big(1-(n-1)I_{\mu}(x,y)\Big),\quad\text{where}\quad                                                                                                                                                                                                                                                                                                                                                                                                                                                                                                                                                                                                                             
I_{\mu}(x,y):=\int\limits_{\mathbb R}\frac{(1+u^2)\,\tau(du)}{(u-x)^2+y^2}.
\end{equation}
For every real fixed $x$, consider the~equation
\begin{equation}\label{7.9}
 y\Big(1-(n-1)I_{\mu}(x,y)\Big)=0,\quad y>0.
\end{equation}
Since $y\mapsto I_{\mu}(x,y),\,y>0$, is positive and monotone, and decreases to $0$ as $y\to\infty$,
it is clear that the~equation (\ref{7.9}) has at most one positive solution. 
If such a~solution exists, denote it
by $y_n(x)$.
Note that (\ref{7.9}) does not have a~solution $y>0$ for any given $x\in\mathbb R$ if and only if
$I_{\mu}(x,0)\le 1/(n-1)$.
Consider the~set $S:=\{x\in\mathbb R:I_{\mu}(x,0)\le 1/(n-1)\}$. We put $y_n(x)=0$ for $x\in S$. 
We proved in~\cite{ChG:2011}, Section~3, p.13, that
the~curve $\gamma_n$ given by the~equation $z=x+iy_n(x),\,x\in\mathbb R$, is continuous and simple.

Consider the~open domain $\tilde{D}_n:=\{z=x+iy,\,x,y\in\mathbb R: y>y_n(x)\}$.
\begin{lemma}\label{l7.4}                                                                                                                
Let $Z\in\mathcal F$ be the~solution of the~equation $(\ref{3.10})$. The function $Z(z)$ maps $\mathbb C^+$
conformally onto $\tilde{D}_n$.  
Moreover the~function $Z(z),\,z\in\mathbb C^+$, 
is continuous up to the~real axis and it establishes a homeomorphism between the real axis and 
the~curve $\gamma_n$.                                                                                                                                      
\end{lemma}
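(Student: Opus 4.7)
The plan is to set $H(w):=nw-(n-1)F_\mu(w)$ for $w\in\mathbb C^+$, so that equation~(\ref{3.10}) reads $H(Z(z))=z$, exhibiting $Z$ as a right inverse of $H$. Using the representation~(\ref{7.8}) for $F_\mu$, a direct computation of imaginary parts yields
\[
\Im H(x+iy)=y\bigl(1-(n-1)I_\mu(x,y)\bigr),\qquad y>0,
\]
which exhibits $\tilde D_n$ as precisely the open subset $\{w\in\mathbb C^+:\Im H(w)>0\}$. Because $H(Z(z))=z\in\mathbb C^+$, we obtain $Z(\mathbb C^+)\subset\tilde D_n$, and $Z$ is injective since $H\circ Z=\operatorname{id}$.

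Next I would show that $Z(\mathbb C^+)$ is both open and closed in $\tilde D_n$, hence equal to it by connectedness. Openness is the open mapping theorem applied to the non-constant analytic function $Z$. For closedness, suppose $Z(z_k)\to w_*\in\tilde D_n$; then $z_k=H(Z(z_k))\to H(w_*)$ by continuity of $H$, and $H(w_*)\in\mathbb C^+$ because $w_*\in\tilde D_n$. Setting $z_*:=H(w_*)$, continuity of $Z$ gives $w_*=\lim Z(z_k)=Z(z_*)\in Z(\mathbb C^+)$. The domain $\tilde D_n$, being the region above the graph of the continuous non-negative function $x\mapsto y_n(x)$, is connected, so $Z(\mathbb C^+)=\tilde D_n$. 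Conformality is automatic for an injective analytic map between planar domains; alternatively, differentiating under the integral sign gives $H'(w)=1-(n-1)\int(1+u^2)(u-w)^{-2}\,\tau(du)$, and the triangle inequality yields $|H'(w)|\ge 1-(n-1)I_\mu(x,y)>0$ throughout $\tilde D_n$, so $Z'=1/(H'\circ Z)$ is finite and non-vanishing.

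For the boundary statement I would invoke Carath\'eodory's extension theorem. The curve $\gamma_n$ is continuous and simple by \cite{ChG:2011}, and one checks that $y_n(x)=0$ for $|x|$ sufficiently large (since $I_\mu(x,0)\to 0$ as $|x|\to\infty$ by dominated convergence, using $m_2(\tau)<\infty$ from Proposition~\ref{3.3^*pro}), so $\gamma_n\cup\{\infty\}$ is a Jordan curve on the Riemann sphere $\widehat{\mathbb C}$ bounding the simply connected domain $\tilde D_n$. Carath\'eodory then extends the conformal equivalence $Z\colon\mathbb C^+\to\tilde D_n$ to a homeomorphism between the sphere closures; since $Z\in\mathcal F$ forces $Z(z)/z\to 1$ non-tangentially, the extension fixes $\infty$, and restricting to $\mathbb R$ yields the desired homeomorphism onto $\gamma_n$. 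The main technical point, and the one I expect to require the most care, is the verification that $\tilde D_n$ is indeed a Jordan domain on the sphere: this rests on the simplicity of $\gamma_n$ from the prior work together with $y_n(\pm\infty)=0$. Once this is in hand, Carath\'eodory closes the argument cleanly.
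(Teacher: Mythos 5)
The paper does not prove this lemma; it simply cites \cite{ChG:2011}, Lemma~3.4, so a direct comparison is impossible. Your argument is a self-contained and essentially correct alternative, and the overall scaffold is exactly what one would expect: identify $\tilde D_n$ with $\{w\in\mathbb C^+:\Im H(w)>0\}$ where $H(w)=nw-(n-1)F_\mu(w)$, deduce $Z(\mathbb C^+)\subset\tilde D_n$ and injectivity from $H\circ Z=\operatorname{id}$, then get surjectivity by the open-closed argument in the connected domain $\tilde D_n$, and finish with Carath\'eodory. The formula $H'(w)=1-(n-1)\int(1+u^2)(u-w)^{-2}\tau(du)$ and the lower bound $|H'|\ge 1-(n-1)I_\mu>0$ on $\tilde D_n$ are both correct.

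There is one genuine gap, and you flagged the right place yourself: the claim that $y_n(x)=0$ for $|x|$ sufficiently large. Your justification --- $I_\mu(x,0)\to0$ by dominated convergence --- does not work, because $(1+u^2)/(u-x)^2$ has no dominating function independent of $x$: the singularity sits at $u=x$ and travels with $x$. If $\tau$ has atoms escaping to infinity, $I_\mu(x,0)$ can even be $+\infty$ along a sequence $x\to\infty$, so the claim is false as stated under the moment hypotheses of this paper (where $\tau$ is only assumed to have some finite moments, not compact support). What you actually need for the Jordan-domain-on-the-sphere step is much weaker and does hold: $y_n$ is bounded, since $I_\mu(x,y)\le(\tau(\mathbb R)+m_2(\tau))/y^2$ forces $y_n(x)\le\sqrt{(n-1)(\tau(\mathbb R)+m_2(\tau))}$; in fact $y_n(x)\to0$ as $|x|\to\infty$, which one proves by fixing $\delta>0$ and applying dominated convergence to $I_\mu(x,\delta)$ (the dominating function $(1+u^2)/\delta^2$ is $\tau$-integrable precisely because $m_2(\tau)<\infty$), contradicting $I_\mu(x,y_n(x))=1/(n-1)$ whenever $y_n(x)\ge\delta$. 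With boundedness or decay of $y_n$ in hand, the curve $\gamma_n\cup\{\infty\}$ is indeed a Jordan curve on $\widehat{\mathbb C}$ and your Carath\'eodory argument goes through. A second, minor remark: the boundedness of $\tau$'s moments used here is not stated in the lemma, but is available throughout the paper since $m_8(\mu)<\infty$ gives $m_6(\tau)<\infty$ by Proposition~\ref{3.3^*pro}, and in \cite{ChG:2011} the measures are compactly supported, so both contexts cover it.
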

This lemma was proved in~\cite{ChG:2011} (see Lemma~3.4). The following lemma was proved as well 
in~\cite{ChG:2011} (see Lemma~3.5).

\begin{lemma}\label{l7.5}
Let $\mu$ be a~probability measure such that $m_1(\mu)=0,m_2(\mu)=1$. Assume that 
$\int_{|u|>\sqrt{(n-1)/8}}u^2\,\mu(du)\le 1/10$ for some
positive integer $n\ge 10^3$.
Then the~following inequality holds
\begin{equation}\label{7.11*}
|Z(z)|\ge \sqrt{(n-1)/8},\qquad z\in\mathbb C^+, 
\end{equation} 
where $Z\in\mathcal F$ is the~solution of the~equation $(\ref{3.10})$.
\end{lemma}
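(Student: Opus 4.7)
The plan is to translate the claim, via Lemma~\ref{l7.4}, into a statement about the conformal image $\tilde D_n=Z(\mathbb C^+)$. Since $Z$ is inverted on $\tilde D_n$ by $W\mapsto nW-(n-1)F_\mu(W)$ (which is how $\tilde D_n$ was cut out by condition~(\ref{7.9})), I need to show that every $W\in\mathbb C^+$ with $|W|^2<(n-1)/8$ satisfies $\Im[nW-(n-1)F_\mu(W)]\le 0$. Using Proposition~\ref{3.3^*pro} with $m_1(\mu)=0$, $m_2(\mu)=1$ — so that the measure $\tilde\tau(du):=(1+u^2)\tau(du)$ arising from the Nevanlinna form (\ref{7.8}) has total mass $1$ — a short calculation gives
\[\Im[nW-(n-1)F_\mu(W)]=Y\bigl[1-(n-1)K(W)\bigr], \qquad K(W):=\int\frac{\tilde\tau(du)}{|u-W|^2},\]
so the target reduces to $(n-1)K(W)\ge 1$ on the disc $\{|W|^2<(n-1)/8\}$.

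The key step is an identity that replaces the unknown measure $\tilde\tau$ by quantities involving only $\mu$. Equating the two expressions for $\Im F_\mu(W)$ — the Nevanlinna form $Y(1+K(W))$ and the dual form $-\Im G_\mu(W)/|G_\mu(W)|^2=YJ(W)/|G_\mu(W)|^2$ (with $J(W):=\int\mu(du)/|u-W|^2$) obtained from $F_\mu=1/G_\mu$ — yields
\[K(W)=\frac{J(W)-|G_\mu(W)|^2}{|G_\mu(W)|^2}.\]
Writing $h(u):=1/|u-W|^2$ and $\nu':=h\mu/J$ (a probability measure), the variance identity $J-|G_\mu|^2=\int|(W-u)^{-1}-G_\mu|^2\,d\mu=J^2\operatorname{Var}_{\nu'}(u)$, combined with $|G_\mu|^2\le J$ (Cauchy--Schwarz, hence $J/|G_\mu|^2\ge 1$), gives the chain
\[K(W)\ge J(W)\operatorname{Var}_{\nu'}(u)=\int(u-\bar u_{\nu'})^2\,h(u)\,d\mu(u),\qquad \bar u_{\nu'}:=\int u\,d\nu'.\]

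Finally I would plug in the moment hypothesis. Set $A:=\sqrt{(n-1)/8}$ and $B:=[-A,A]$. For $|W|<A$ and $u\in B$, $|u-W|\le 2A$, so $h(u)\ge 1/(4A^2)$ on $B$. The tail bound $\int_{|u|>A}u^2\,d\mu\le 1/10$ together with $m_1=0$, $m_2=1$ gives $\mu(B^c)\le 4/(5(n-1))$, $\int_B u^2\,d\mu\ge 9/10$ and (Cauchy--Schwarz on $B^c$) $(\int_B u\,d\mu)^2=(\int_{B^c}u\,d\mu)^2\le 2/(25(n-1))$; so $\int_B(u-\bar u_B)^2\,d\mu\ge 4/5$ for $n\ge 10^3$. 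Since $\bar u_B$ is the minimizer of $c\mapsto\int_B(u-c)^2\,d\mu$,
\[K(W)\ge\frac{1}{4A^2}\int_B(u-\bar u_B)^2\,d\mu\ge\frac{4/5}{4A^2}=\frac{8}{5(n-1)}>\frac{1}{n-1},\]
producing $(n-1)K(W)>1$ and hence $\Im[nW-(n-1)F_\mu(W)]<0$, the required contradiction.

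The place the argument hinges — and the step I expect to be the most delicate — is the identity $K=J/|G_\mu|^2-1$, together with its variance reformulation: these are what convert an estimate involving $\tilde\tau$ (whose tail is not directly controlled by the hypothesis) into one involving only $\mu$, at which point the tail assumption enters transparently. Everything after the chain $K(W)\ge\int(u-\bar u_{\nu'})^2 h\,d\mu$ is moment bookkeeping under the assumed tail control.
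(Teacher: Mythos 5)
Your proof is correct, and it is a complete argument. The paper does not actually reproduce a proof of Lemma~\ref{l7.5} — it cites Lemma~3.5 of \cite{ChG:2011} — so a line-by-line comparison is not possible from the source at hand; I can only assess the argument on its own terms, and it holds up.

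The reduction in your first step is exactly what the structure of Section~3 invites: by Lemma~\ref{l7.4}, $Z(\mathbb{C}^+)=\tilde D_n$, and $\tilde D_n$ is precisely $\{W\in\mathbb{C}^+:(n-1)I_\mu(\Re W,\Im W)<1\}$, since $I_\mu(x,\cdot)$ is strictly decreasing in $y$ and vanishes at the curve $\gamma_n$. So it suffices to show $(n-1)K(W)\ge 1$ whenever $W\in\mathbb{C}^+$ and $|W|<A:=\sqrt{(n-1)/8}$, with $K(W)=I_\mu(\Re W,\Im W)$. The genuinely nontrivial step — and the one I would have flagged if it were wrong — is the identity
\[
K(W)=\frac{J(W)}{|G_\mu(W)|^2}-1,\qquad J(W)-|G_\mu(W)|^2=J(W)^2\operatorname{Var}_{\nu'}(u),
\]
which you get by equating $\Im F_\mu(W)=Y(1+K(W))$ (Nevanlinna side, using $\tau(\mathbb R)=m_2(\mu)=1$ from Proposition~\ref{3.3^*pro}) with $\Im F_\mu(W)=-\Im G_\mu(W)/|G_\mu(W)|^2=YJ(W)/|G_\mu(W)|^2$. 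I verified the variance identity: writing $h(u)=|u-W|^{-2}$ one has $G_\mu(W)=J(\overline W-\bar u_{\nu'})$, and the cross term $\int h(u-\bar u_{\nu'})\,d\mu$ vanishes by definition of $\bar u_{\nu'}$, yielding $1-J\,|\overline W-\bar u_{\nu'}|^2=\int h(u-\bar u_{\nu'})^2\,d\mu$, which is what you need. Together with $|G_\mu|^2\le J$ this gives $K\ge\int(u-\bar u_{\nu'})^2h\,d\mu$, and your moment/tail bookkeeping then produces $K(W)\ge (4/5)/(4A^2)=8/(5(n-1))>1/(n-1)$, with plenty of slack for $n\ge 10^3$. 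The only small imprecision is that you quietly use $\mu(B)\ge 1/2$ to absorb the $(\int_Bu\,d\mu)^2/\mu(B)$ term; since $\mu(B)\ge 1-4/(5(n-1))$ this is trivially true and the slack in $9/10\to 4/5$ covers it, so no harm is done. What your argument buys is a clean bypass of the measure $\tau$: the tail hypothesis is stated for $\mu$, and the Cauchy--Schwarz/variance identity is exactly what converts the required lower bound on $K(W)$ into a lower bound on truncated second moments of $\mu$, which is where the hypothesis bites.
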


\section{Free Meixner measures}

Consider the~three-parameter family of probability measures $\{\mu_{a,b,d}:
a\in\mathbb R, b<1, d<1\}$ with the~reciprocal Cauchy transform
\begin{equation}\label{2.3h}
\frac 1{G_{\mu_{a,b,d}}(z)}=a+\frac 12\Big((1+b)(z-a)+\sqrt{(1-b)^2(z-a)^2-4(1-d)}\Big),
\quad z\in\mathbb C,
\end{equation}
which we will call the~free centered (i.e. with mean zero) Meixner measures. 
In this formula we choose the~branch of the~square root determined by the~condition
$\Im z>0$ implies $\Im (1/G_{\mu_{a,b,d}}(z))\ge 0$.
These measures are counterparts of the~classical measures discovered by Meixner~\cite{Me:1934}.
The~free Meixner type measures occurred in many places in the~literature, see for example 
\cite{BoBr:2006}, \cite{SaYo:2001}.

Saitoh and Yoshida~\cite{SaYo:2001} have proved that
the~absolutely continuous part of the~free Meixner measure $\mu_{a,b,d},a\in\mathbb R,b<1,d<1$, is 
given by
\begin{equation}\label{2.3g}
\frac{\sqrt{4(1-d)-(1-b)^2(x-a)^2}}{2\pi f(x)},
\end{equation}
when $a-2\sqrt{1-d}/(1-b)\le x\le a+2\sqrt{1-d}/(1-b)$, where 
\begin{equation}\notag
f(x):=bx^2+a(1-b)x+1-d;  
\end{equation}

Saitoh and Yoshida proved as well that for $0\le b<1$ the (centered) 
free Meixner measure
$\mu_{a,b,d}$ is $\boxplus$-infinitely divisible.

As we have shown in~\cite{ChG:2011}, Section 4, it follows from Saitoh and Yoshida's results that the~probability measure
$\mu_{a_n,b_n,d_n}$ with the parameters $a_n,b_n,d_n$ from (\ref{2.3****}) is $\boxplus$-infinitely divisible 
and it is absolutely continuous with a~density of the~form (\ref{2.3g}) where
$a=a_n, b=b_n,d=d_n$ for sufficiently large $n\ge n_1(\mu)$.

\section{Passage to measures with bounded supports}

Let us assume that $\mu\in\mathcal M$ and $m_8(\mu)<\infty$. In addition let $m_1(\mu)=0$ and $m_2(\mu)=1$.
By Proposition~\ref{3.3pro}, there exists $Z(z)\in\mathcal F$
such that (\ref{3.10}) holds, and $F_{\mu^{n\boxplus}}(z)=F_{\mu}(Z(z))$.
Hence $F_{\mu_n}(z)=F_{\mu}(\sqrt n S_n(z))/\sqrt n,\,z\in\mathbb C^+$, 
where $S_n(z):=Z(\sqrt n z)/\sqrt n$. Since $m_1(\mu)=0,\,m_2(\mu)=1$ and $m_8(\mu)<\infty$, 
by Proposition~\ref{3.3^*pro},
we have the representation
\begin{equation}\notag
F_{\mu}(z)=z+\int_{\mathbb R}\frac{\tau(du)}{u-z},\quad z\in\mathbb R, 
\end{equation}
where $\tau$ is a nonnegative measure such that $\tau(\mathbb R)=1$ and $m_6(\tau)<\infty$. 
Consider a function
$$
F(z)=z+\int_{|u|\le \sqrt{n-1}/\pi}\frac{\tau(du)}{u-z},\quad z\in\mathbb R.
$$ 
This function belongs to the class $\Cal F$ and therefore there exists the probability measure $\mu^*$
such that $F_{\mu^*}(z)=F(z),\,z\in\mathbb R$. The probability measure $\mu^*$ depends of cause on
$n$. Moreover we conclude from the inversion formula that $\mu^*([-\sqrt{n-1}/3,\sqrt{n-1}/3])=1$
for $n\ge n_1(\mu)$.
By Proposition~\ref{3.3^*pro}, we see as well that 
\begin{align}\label{Pas5.1}
&m_1(\mu^*)=0,\quad m_2(\mu)-m_2(\mu^*)=
\tau(\mathbb R\setminus[-\sqrt{n-1}/\pi,\sqrt{n-1}/\pi])\le c(\mu)n^{-3},\notag\\
&\text{and}\quad |m_j(\mu)-m_j(\mu^*)|\le c(\mu)n^{-3+(j-2)/2},\quad j=3,\dots,8.
\end{align}

Let $X^*,X_1^*,X_2^*,\dots$ be free identically distributed random variables such that
$\Cal L(X^*)=\mu^*$. 
Denote $\mu_n^*:=\Cal L((X_1^*+\dots+X_n^*)/\sqrt {n})$. As before, 
by Proposition~\ref{3.3pro}, there exists $W(z)\in\mathcal F$
such that (\ref{3.10}) holds with $Z=W$ and $\mu=\mu^*$, and $F_{(\mu^*)^{n\boxplus}}(z)=F_{\mu^*}(W(z))$.
Hence $F_{\mu_n^*}(z)=F_{\mu^*}(\sqrt {n} T_n(z))/\sqrt {n},\,z\in\mathbb C^+$, 
where $T_n(z):=W(\sqrt {n} z)/\sqrt {n}$. In the sequel we need information about
the behaviour of the functions $T_n(z)$ and $S_n(z)$. By Lemma~\ref{l7.4}, 
these functions are continuous up to the real axis for $n\ge n_1(\mu)$. 
Their values for $z=x\in\mathbb R$ we denote by 
$T_n(x)$ and $S_n(x)$, respectively.
In order to formulate next results about $T_n(z)$ 
we introduce some notations. Denote by $M_n(z)$ the reciprocal Cauchy transform of the free Meixner
measure $\mu_{a_n,b_n,d_n}$ with the parameters $a_n,b_n$ and $d_n$ from (\ref{2.3****}), i.e.,
$$
M_{n}(z):=a_n+\frac 12\Big(\big(1+b_n\big)(z-a_n)+
\sqrt{\big(1-b_n\big)^2(z-a_n)^2-4\big(1-d_n\big)}\Big),\quad z\in\mathbb C^+.
$$
Denote by $D_n$ the rectangle
\begin{equation}\notag
\Big\{z\in\mathbb C:0<\Im z\le 3,|\Re z-a_n|\le \frac 2{e_n}-h\Big\}.
\end{equation}
where $h:=c_2(\mu)n^{-3/2}$ with some constant $c_2(\mu)$.

Repeating step by the step the arguments of Section~7 (see Subsections 7.2--7.7) of our paper~\cite{ChG:2011}
we establish the following result.
\begin{theorem}\label{th4a}
Let $\mu\in\mathcal M$ such that $m_8(\mu)<\infty$ and $m_1(\mu)=0,\,m_2(\mu)=1$.
Then
there exists the constant $c_2(\mu)$ such that the following relation holds, for $z\in D_n$ and
$n\ge n_1(\mu)$,
\begin{align}
T_n(z)&=M_{n}(z)+\frac{r_{n1}(z)}{n^{3/2}\sqrt{(e_n(z-a_n))^2-4}}, \label{th4.1}\\
G_{\mu_n^*}(z)&=\frac 1{T_n(z)}+\frac 1{nT_n(z)^3}+\frac{r_{n2}(z)}{n^{3/2}},\label{th4.2}
\end{align}
where $|r_{nj}(z)|\le c(\mu),\,j=1,2$.
\end{theorem}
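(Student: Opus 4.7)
The plan is to mirror the complex-analytic approach of \cite{ChG:2011}: derive a fixed-point equation $\Psi(T_n(z))=z$ via Proposition~\ref{3.3pro}, verify that $M_n(z)$ satisfies this equation up to error $O(n^{-3/2})$, then invert via a mean value argument. Substituting $W(\sqrt n z)=\sqrt n T_n(z)$ into (\ref{3.10}) applied to $\mu^*$, and using the representation $F_{\mu^*}(w)=w+\int_{|u|\le\sqrt{n-1}/\pi}\tau(du)/(u-w)$, yields
$$
\Psi(w) := w - \frac{n-1}{\sqrt n}\int_{|u|\le\sqrt{n-1}/\pi}\frac{\tau(du)}{u-\sqrt n w}.
$$
Since $\mu^*$ has support in $[-\sqrt{n-1}/3,\sqrt{n-1}/3]$, Lemma~\ref{l7.5} applies trivially and gives $|\sqrt n T_n(z)|\ge\sqrt{(n-1)/8}$; the same lower bound holds for $|\sqrt n M_n(z)|$ on $D_n$. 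Combined with $|u|\le\sqrt{n-1}/\pi$, this yields $|u/(\sqrt n\,\cdot)|\le\sqrt 8/\pi<1$, so the kernel admits an absolutely convergent geometric expansion.

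The heart of the proof is to verify $\Psi(M_n(z))-z=O(n^{-3/2})$. Expanding the geometric series gives
$$
\Psi(w) = w + \sum_{k\ge 0}\frac{(n-1)\,m_k(\tau^*)}{n^{1+k/2}\,w^{k+1}}.
$$
Substituting $w=M_n(z)$, converting the moments via (\ref{3.3^*prob}) and (\ref{Pas5.1}) to $m_0(\tau^*)=1+O(n^{-3})$, $m_1(\tau^*)=m_3(\mu)+O(n^{-3/2})$, $m_2(\tau^*)=n(b_n+a_n^2)+O(n^{-2})$, and using the Meixner quadratic $(u-v)(v-b_n u)=1-d_n$ (with $u=z-a_n$, $v=M_n(z)-a_n$) to reduce higher powers of $M_n$ to linear expressions in $z$, one finds that Saitoh--Yoshida's parametrization is calibrated precisely so that the leading three orders in $1/n$ of $\Psi(M_n(z))-z$ cancel identically, leaving a remainder of order $n^{-3/2}$. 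Then the mean value theorem and $\Psi(T_n(z))=z$ give $T_n(z)-M_n(z)=-[\Psi(M_n(z))-z]/\Psi'(\xi)$, and a direct calculation via the quadratic shows $|\Psi'(M_n(z))|$ is comparable on $D_n$ to $|\sqrt{(e_n(z-a_n))^2-4}|$ (this square root encodes the branching of $M_n$ at the edges $z=a_n\pm 2/e_n$), producing the singular factor in (\ref{th4.1}). For (\ref{th4.2}) one writes $G_{\mu_n^*}(z)=\sqrt n/F_{\mu^*}(\sqrt n T_n(z))$ and expands $F_{\mu^*}$ in inverse powers of $\sqrt n T_n(z)$; the leading two terms give $1/T_n(z)+m_0(\tau^*)/(nT_n(z)^3)$, and $m_0(\tau^*)=1+O(n^{-3})$ is absorbed into the $O(n^{-3/2})$ remainder.

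The principal obstacle is the coefficient matching in $\Psi(M_n(z))-z$: expanding $\Psi$ to order $1/M_n^4$ and simplifying via the Meixner quadratic, one must verify that all contributions of sizes $1$, $n^{-1/2}$ and $n^{-1}$ cancel exactly, leaving only the $O(n^{-3/2})$ remainder---this is where the specific form of $a_n,b_n,d_n$ is used in an essential way. A secondary difficulty is making the mean value estimate uniform on $D_n$: the margin $h=c_2(\mu)n^{-3/2}$ is chosen so that $|\Psi'(M_n(z))|$ dominates $n^{-3/2}$, but one must also confirm that the intermediate point $\xi$ satisfies $|\Psi'(\xi)|\asymp|\Psi'(M_n(z))|$, which requires a bootstrap: a crude bound $T_n-M_n=o(1)$ from the expansion, followed by iteration to upgrade to the full $n^{-3/2}$ rate. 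These algebraic and perturbative computations, together with control of the geometric-series remainder, are the content of Subsections 7.2--7.7 of \cite{ChG:2011}, which the authors invoke verbatim.
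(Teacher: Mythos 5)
Your proposed argument is a genuinely different route from what the paper actually does. The paper (following Section~7 of \cite{ChG:2011}, which it recapitulates in its own Section~7) derives a \emph{quintic polynomial} equation (\ref{5.5}) for $T_n(z)$, localizes its roots, factors it into a cubic (with three roots of size $O(n^{-1/2})$) times a quadratic, reads off the quadratic's coefficients via Vieta's formulas, and then \emph{solves the quadratic explicitly} to obtain $T_n(z)$ in closed form (\ref{5.11}); the comparison with $M_n(z)$ in (\ref{5.14})--(\ref{5.15}) is then a direct comparison of two square roots of nearby discriminants, with the $\sqrt{(e_n(z-a_n))^2-4}$ singularity appearing \emph{exactly}. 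Your plan instead writes $T_n=\Psi^{-1}(z)$ and $M_n\approx\Psi^{-1}(z+\epsilon)$ with $\Psi=T_n^{(-1)}$ and $\epsilon=O(n^{-3/2})$, and concludes by inverting $\Psi$ via a mean-value bound. So your claim that the reference uses this argument ``verbatim'' is incorrect; the two approaches share the functional equation as a starting point but diverge immediately thereafter.

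On the substance: I checked your central claim that $\Psi(M_n(z))-z=O(n^{-3/2})$ uniformly on $D_n$, and it does hold. Collecting terms, one gets
$\Psi(M_n)-z=\ffrac{b_n}{M_n^3}\bigl(M_n^2-(z-a_n)M_n+1\bigr)+O(n^{-3/2})$,
and the Meixner quadratic $(u-v)(v-b_nu)=1-d_n$ reduces $M_n^2-(z-a_n)M_n+1$ to $b_nuv-b_nu^2+d_n+2a_nv+a_n^2-a_nu=O(n^{-1/2})$, giving the claimed $O(n^{-3/2})$. So the coefficient matching is correct even though you did not carry it out. The genuine gap in your proposal is the inversion step. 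The paper's margin $h=c_2(\mu)n^{-3/2}$ is tuned so that, in the paper's \emph{explicit} formula for the quadratic roots, the discriminant perturbation $r_{n4}(z)$ satisfies (\ref{5.13}) and the two square roots can be compared term by term. Your mean-value formulation $T_n-M_n=-[\Psi(M_n)-z]/\Psi'(\xi)$ has no such safety margin built in: $\Psi'$ vanishes at the critical point corresponding to the spectral edge, the intermediate point $\xi$ lies somewhere on a segment whose \emph{other} endpoint $T_n(z)$ is precisely what you are trying to locate, and the ``crude bound $T_n-M_n=o(1)$'' you invoke to start the bootstrap is not established for $z$ within $O(n^{-3/2})$ of the edge. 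Moreover, the naive Lagrange mean-value identity does not hold for complex analytic maps; one must use the integral form $T_n-M_n=-\int_{z}^{z+\epsilon}T_n'(\zeta)\,d\zeta$ and control $T_n'$ on the whole segment, which requires knowing a priori that the segment stays away from the critical point. This is exactly the circularity the polynomial-factoring approach was designed to avoid (comparing roots of two explicit quadratics requires no control of a derivative). Without making the bootstrap precise near the boundary of $D_n$, the proposal does not yield (\ref{th4.1}); the derivation of (\ref{th4.2}) from (\ref{th4.2^*}) and the lower bound $|T_n(z)|\geq 1.03/3$ is, by contrast, essentially the same as the paper's (\ref{5.2}) and is fine.
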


In Section~7 of this paper we shall give more detailed explanations of the proof of this theorem.

By Lemmas~\ref{l7.4},~\ref{l7.5},
$|T_n(z)|\ge 1.03/3$ for $z\in \mathbb C^+\cup \mathbb R$ and for $n\ge n_1(\mu)$. It is obvious that
the same estimate holds for $M_n(z)$. Since 
\begin{equation}\label{th4.2^*}
G_{\mu_n^*}(z)=\sqrt {n} G_{\mu^*}(\sqrt {n} T_n(z))=\int\limits_{[-\sqrt{n-1}/3,\sqrt{n-1}/3]}
\frac{\mu^*(du)}{T_n(z)-u/\sqrt {n}},\qquad z\in\mathbb C^+, 
\end{equation}
we conclude that $G_{\mu_n^*}(z)$ is a continuous function up to the real axis. Denote its value for real $x$  
by $G_{\mu_n^*}(x)$.
Denote $G_{\hat{\mu}_n}(z):=1/T_n(z),\,z\in\mathbb C^+$. This function is continuous up to the real axis
as well. Therefore
$\hat{\mu}_n$ and $\mu_n^*$ are absolutely continuous measures with continuous densities
$\hat{p}_n(x)$ and $p_n^*(x)$, respectively,
\begin{align}
&\hat{p}_n(x)=-\lim_{\varepsilon\downarrow 0}\frac 1{\pi}\Im \frac 1{T_n(x+i\varepsilon)} 
=-\frac 1{\pi}\Im \frac 1{T_n(x)},\notag\\
&p_n^*(x)=-\lim_{\varepsilon\downarrow 0}\frac 1{\pi}\Im G_{\mu_n^*}(x+i\varepsilon)
=-\frac 1{\pi}\Im G_{\mu_n^*}(x).\notag
\end{align}

In addition, $\hat{p}_n(x)\le 1$ and $p_n^*(x)\le 50$ for all $x\in \mathbb R$ and $n\ge n_1(\mu)$.  
Let $c_2(\mu)$ be the constant defined in Theorem~\ref{th4a} and as before $h=c_2(\mu)n^{-3/2}$.
\begin{theorem}\label{th5}
Let $\mu\in\mathcal M$ such that $m_8(\mu)<\infty$ and $m_1(\mu)=0,\,m_2(\mu)=1$.
Then, for $x\in \hat{I}_n:=\{x\in\mathbb R:|x-a_n|\le\frac 2{e_n}-h\}$ and $n\ge n_1(\mu)$, 
the following relation holds
\begin{align}\label{loc.6}  
p_n^*(x)=v_n(x-a_n)
+\frac {c(\mu)\theta}{n^{3/2}\sqrt{4-(e_n(x-a_n))^2}}, 
\end{align}
where
\begin{align}\notag
v_n(x):=\Big(1+\frac{d_n}2-a_n^2-\frac 1{n}-a_nx-\Big(b_n-a_n^2-\frac 1{n}\Big)x^2\Big)
p_w(e_nx),\quad x\in\mathbb R. 
\end{align} 
\end{theorem}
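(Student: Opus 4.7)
The plan is to use the Stieltjes inversion formula $p_n^*(x)=-\tfrac{1}{\pi}\Im G_{\mu_n^*}(x)$ for $x\in\hat I_n$ and push the expansion of Theorem~\ref{th4a} through to the boundary values. Starting from (\ref{th4.2}) I would write
\[
p_n^*(x)=-\frac1\pi\Im\frac1{T_n(x)}-\frac1{\pi n}\Im\frac1{T_n(x)^3}+\frac{c(\mu)\theta}{n^{3/2}},
\]
and then, using (\ref{th4.1}) together with the two-sided bound $|M_n(x)|,|T_n(x)|\ge c>0$ (which follows from Lemmas~\ref{l7.4}, \ref{l7.5}), I would expand
\[
\frac1{T_n(x)}=\frac1{M_n(x)}-\frac{T_n(x)-M_n(x)}{M_n(x)^2}+O\!\left(|T_n-M_n|^2\right),
\qquad \frac1{T_n(x)^3}=\frac1{M_n(x)^3}+O(|T_n-M_n|).
\]
Because $|T_n(x)-M_n(x)|\le c(\mu)n^{-3/2}/\sqrt{4-e_n^2(x-a_n)^2}$ on $\hat I_n$ (where this radical is $\ge c n^{-3/4}$), the quadratic error is dominated by $c(\mu)/(n^{3/2}\sqrt{4-e_n^2(x-a_n)^2})$, and the correction in the $1/T_n^3$ term contributes only $O(1/n^{5/2})$, which is absorbed.

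It remains to evaluate the two explicit boundary imaginary parts. Writing $M_n(x)=A+iB$ with $A=a_n+\tfrac12(1+b_n)(x-a_n)$ and $B=\tfrac12\sqrt{4(1-d_n)-(1-b_n)^2(x-a_n)^2}$, a direct computation gives the convenient identity
\[
A^2+B^2=f_n(x),\qquad f_n(x)=b_nx^2+a_n(1-b_n)x+1-d_n,
\]
and $B=\pi\sqrt{1-d_n}\,p_w(e_n(x-a_n))$. Therefore
\[
-\frac1\pi\Im\frac1{M_n(x)}=\frac{\sqrt{1-d_n}}{f_n(x)}\,p_w(e_n(x-a_n)),
\qquad -\frac1\pi\Im\frac1{M_n(x)^3}=\frac{\sqrt{1-d_n}(3A^2-B^2)}{f_n(x)^3}\,p_w(e_n(x-a_n)).
\]

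With $y=x-a_n$, I would next carry out a Taylor expansion of the two scalar coefficients in powers of $1/n$, using $a_n=O(n^{-1/2})$ and $b_n,d_n=O(n^{-1})$. A short calculation shows
\[
\frac{\sqrt{1-d_n}}{f_n(x)}=1+\frac{d_n}{2}-a_n^2-a_ny-b_ny^2+a_n^2y^2+O(n^{-3/2}),
\]
while at leading order $3A^2-B^2= y^2-1+O(n^{-1/2})$ and $\sqrt{1-d_n}/f_n(x)^3=1+O(n^{-1/2})$, so
\[
\frac{\sqrt{1-d_n}(3A^2-B^2)}{n f_n(x)^3}=\frac{y^2-1}{n}+O(n^{-3/2}).
\]
Summing these contributions produces exactly the bracketed coefficient of $p_w(e_ny)$ in $v_n(y)$, modulo an error of $O(n^{-3/2})$ times a uniformly bounded function of $y$, which again is absorbed into the stated bound since $\sqrt{4-e_n^2y^2}\le 2$.

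The chief technical obstacle is maintaining the correct singular error form at the edges of $\hat I_n$: one must verify that $\sqrt{4-e_n^2(x-a_n)^2}\gtrsim\sqrt{h}\gtrsim n^{-3/4}$ throughout $\hat I_n$ so that quadratic remainders from the $1/T_n$ expansion genuinely fall into the stated bound $c(\mu)\theta/(n^{3/2}\sqrt{4-e_n^2(x-a_n)^2})$ rather than producing a worse denominator, and that the Taylor remainders of the coefficients $\sqrt{1-d_n}/f_n$ and $\sqrt{1-d_n}(3A^2-B^2)/(nf_n^3)$ remain uniformly bounded in $y$ on the whole interval, so that their $p_w(e_ny)$ factor forces them to vanish at the edge and thus be dominated by the singular error term. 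Once this bookkeeping is done, assembling the pieces gives (\ref{loc.6}).
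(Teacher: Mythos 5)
Your proposal is correct, and it follows essentially the same route as the paper: start from the Stieltjes inversion and the expansion (\ref{th4.2}) $G_{\mu_n^*}=1/T_n+1/(nT_n^3)+O(n^{-3/2})$, replace $T_n$ by the explicit Meixner transform $M_n$ via (\ref{th4.1}) using the lower bound $|T_n|,|M_n|\ge c>0$, then compute $-\frac1\pi\Im(1/M_n)$ and $-\frac1\pi\Im(1/M_n^3)$ explicitly (the paper's $p_{\mu_{a_n,b_n,d_n}}$ and $q_n$) and Taylor expand the coefficients in $1/n$. The identities $A^2+B^2=f_n(x)$ and $B=\pi\sqrt{1-d_n}\,p_w(e_n(x-a_n))$ you use, and the check that $\sqrt{4-e_n^2(x-a_n)^2}\gtrsim\sqrt h\sim n^{-3/4}$ on $\hat I_n$ so that the remainders stay inside the stated error, are exactly the bookkeeping the paper performs via its equations (\ref{loc.1})--(\ref{loc.5}).
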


Before to prove Theorem~\ref{th5} we shall make the remark.
\begin{remark}
Using the arguments of this paper and the paper\cite{ChG:2011} one can prove the following result.

Let $\mu\in\mathcal M$ such that $m_4(\mu)<\infty$ and $m_1(\mu)=0,\,m_2(\mu)=1$.
Then, for $x\in \tilde{I}_n:=\{x\in\mathbb R:|x-a_n|\le\frac 2{e_n}-o(\frac 1n)\}$, we have
\begin{equation}\notag
p_n^*(x)=v_n(x-a_n)+\frac {o(1/n)}{\sqrt{4-(e_n(x-a_n))^2}}. 
\end{equation}  
\end{remark}

\begin{proof}
We shall use the following estimate, for $x\in\mathbb R$,   
\begin{align}\label{loc.1}
|p_n^*(x)-p_{\mu_{a_n,b_n,d_n}}(x)-\frac 1n q_{n}(x)|&\le |\hat{p}_n(x)-p_{\mu_{a_n,b_n,d_n}}x)|
+|p_n^*(x)-\hat{p}_n(x)-\frac 1n \hat{q}_{n}(x)|\notag\\
&+\frac 1n|q_{n}(x)-\hat{q}_n(x)|, 
\end{align}
where 
\begin{equation}\notag
q_{n}(x):=-\frac 1{\pi}\Im \frac 1{M_n(x)^3},\quad\text{and}\quad
\hat{q}_{n}(x):=-\frac 1{\pi}\Im \frac 1{T_n(x)^3}.
\end{equation}
By (\ref{th4.1}), we easily obtain the upper bound, for $x\in \hat{I}_n$
and $n\ge n_1(\mu)$,
\begin{equation}\label{loc.2}
|\hat{p}_n(x)-p_{\mu_{a_n,b_n,d_n}}x)|\le \frac {c(\mu)}{n^{3/2}\sqrt{4-(e_n(x-a_n))^2}} 
\end{equation}
and, by (\ref{th4.2}), we have
\begin{equation}\label{loc.3}
|p_n^*(x)-\hat{p}_n(x)-\frac 1n \hat{q}_{n}(x)|\le 
\frac {c(\mu)}{n^{3/2}}. 
\end{equation}

Since, by (\ref{2.3g}),
\begin{equation}
p_{\mu_{a_n,b_n,d_n}}(x):=\frac {\sqrt{4(1-d_n)-(1-b_n)^2(x-a_n)^2}}{2\pi(b_nx^2+a_n(1-b_n)x+1-d_n)},
\quad x\in \tilde{I}_n:=[a_n-2/e_n,a_n+2/e_n], 
\end{equation}
we easily conclude that
\begin{equation}\label{loc.4}
p_{\mu_{a_n,b_n,d_n}}(x)=\Big(1+\frac{d_n}2-a_n^2-a_n(x-a_n)-(b_n-a_n^2)(x-a_n)^2\Big)
p_w(e_n(x-a_n))+c(\mu)\theta n^{-3/2} 
\end{equation}
for $x\in \tilde{I}_n$. Using (\ref{th4.1}) and the lower bounds $|T_n(x)|\ge 1.03/3,\, |M_n(x)|\ge 1.03/3$,
for $x\in\mathbb R$, we obtain
\begin{align}\label{loc.4a}
|q_n(x)-\hat{q}_n(x)|&\le \frac 1{\pi}|T_n(x)-M_n(x)|\Big(\frac 1{|M_n(x)T_n(x)^3|}+
\frac 1{|M_n(x)^2T_n(x)^2|}+\frac 1{|M_n(x)^3T_n(x)|}\Big)
\notag\\
&\le \frac { c(\mu)}{n^{3/2}\sqrt{4-(e_n(x-a_n))^2}},\quad x\in \hat{I}_n.   
\end{align}
On the other hand it is not difficult to show that
\begin{align}
q_{n}(x)&:=\frac 1{8\pi}
\sqrt{(4(1-d_{n})-(1-b_n)^2(x-a_n)^2)_+}\notag\\
&\times\frac{3((1+b_n)x+(1-b_n)a_n)^2+(1-b_n)^2(x-a_n)^2-
4(1-d_n)}{(b_nx^2+(1-b_n)a_n x+1-d_{n})^3},\quad x\in\mathbb R,\notag
\end{align}
which leads to the relation
\begin{equation}\label{loc.5}
q_{n}(x)=((x-a_n)^2-1)p_w(e_n(x-a_n))
+c(\mu)\theta (|a_n|+n^{-1})  
\end{equation}
for $x\in \tilde{I}_n$.

Applying (\ref{loc.2}), (\ref{loc.3}), (\ref{loc.4a}) and (\ref{loc.4}), (\ref{loc.5}) to (\ref{loc.1}) 
we arrive at the statement of the theorem.
\end{proof}

\section{Local asymptotic expansion}

First we note that Theorem~{\ref{th7}} follows immediately from Theorem~\ref{th5} and
the following auxiliary result.
\begin{theorem}\label{th4c}
Let $\mu\in\mathcal M$ such that $m_8(\mu)<\infty$ and $m_1(\mu)=0,\,m_2(\mu)=1$.
Then there exists a constant $c_1(\mu)$ in $(\ref{asden1})$ such that the following relation holds
\begin{equation}\notag
p_{n}(x)=p_{n}^*(x)+\frac{c(\mu)\theta}{n^{6/5}\sqrt{4-(e_n(x-a_n))^2}}+\rho_n(x),\quad x\in I_n,\,\,
n\ge n_1(\mu),
\end{equation} 
where $\rho_n(x)$ is a continuous function such that 
$0\le \rho_n(x)\le c(\mu)$ and $\int_{I_n}\rho_n(x)\,dx\le c(\mu)n^{-4}$.
\end{theorem}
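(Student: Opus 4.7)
The natural comparison is through the two subordination functions. For $\mu$ and $\mu^*$, Proposition~\ref{3.3pro} (after rescaling $z\mapsto\sqrt{n}z$ and using the Nevanlinna form of $F_\mu$) gives functions $S_n$ and $T_n$ satisfying
\begin{equation*}
\zeta=S_n(\zeta)+\tfrac{n-1}{n}\int_{\mathbb R}\frac{\tau(du)}{S_n(\zeta)-u/\sqrt n},\qquad
\zeta=T_n(\zeta)+\tfrac{n-1}{n}\int_{|u|\le\sqrt{n-1}/\pi}\frac{\tau(du)}{T_n(\zeta)-u/\sqrt n},
\end{equation*}
where $\tau$ has $m_6(\tau)<\infty$ (coming from $m_8(\mu)<\infty$ via Proposition~\ref{3.3^*pro}). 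Subtracting the two equations gives the linear identity
\begin{equation*}
(S_n-T_n)\,A(\zeta)=-\tfrac{n-1}{n}\int_{|u|>\sqrt{n-1}/\pi}\frac{\tau(du)}{S_n-u/\sqrt n},
\end{equation*}
with $A(\zeta):=1-\tfrac{n-1}{n}\int_{|u|\le\sqrt{n-1}/\pi}\tau(du)/((S_n-u/\sqrt n)(T_n-u/\sqrt n))$.

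The right-hand side has modulus bounded by $c(\mu)n^{-3}/\Im S_n$ via the tail estimate $\tau(\{|u|>\sqrt{n-1}/\pi\})\le c(\mu)n^{-3}$. Differentiating the equation for $S_n$ identifies $A(\zeta)$ with $1/S_n'(\zeta)$ up to $O(n^{-3})$; coupling this with Theorem~\ref{th4a} and the explicit form of $M_n$ yields the lower bound $|A(\zeta)|\gtrsim\sqrt{4-(e_n(\zeta-a_n))^2}$ on $D_n$. Together this gives
\begin{equation*}
|S_n(x)-T_n(x)|\le\frac{c(\mu)}{n^{3}(4-(e_n(x-a_n))^2)},\qquad x\in I_n,
\end{equation*}
where $c_1(\mu)$ in~\eqref{asden1} is chosen large enough that $4-(e_n(x-a_n))^2\ge c(\mu)n^{-6/5}$ throughout $I_n$.

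Next I pass to densities. Using $G_{\mu_n}(x)=\int\mu(du)/(S_n(x)-u/\sqrt n)$ and its analogue for $\mu_n^*$,
\begin{equation*}
G_{\mu_n}-G_{\mu_n^*}=(T_n-S_n)\int\frac{\mu(du)}{(S_n-u/\sqrt n)(T_n-u/\sqrt n)}+\int\frac{(\mu-\mu^*)(du)}{T_n-u/\sqrt n}.
\end{equation*}
The first summand contributes, after taking imaginary parts, at most $|S_n-T_n|/(\Im S_n\,\Im T_n)\le c(\mu)/(n^{3}(4-(e_n(x-a_n))^2)^2)$, which on $I_n$ is dominated by $c(\mu)/(n^{6/5}\sqrt{4-(e_n(x-a_n))^2})$ by the lower bound just recorded. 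For the second summand, I Taylor-expand $1/(T_n-u/\sqrt n)$ to degree $7$ in $u/\sqrt n$; each monomial contributes $|m_k(\mu)-m_k(\mu^*)|/n^{k/2}\le c(\mu)n^{-4}$ by~\eqref{Pas5.1}, totaling $O(n^{-4})$.

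The Taylor remainder is controlled by the tail of $\mu$ on $\{|u|>\varepsilon\sqrt n\}$; I define $\rho_n(x)$ as the absolute value of the imaginary part of this tail contribution, truncated at the Wang bound~\eqref{asden3}. Then $0\le\rho_n(x)\le c(\mu)$ pointwise, continuity of $\rho_n$ follows from continuity of $p_n$ and $p_n^*$, and Chebyshev applied to $m_8(\mu)<\infty$ yields $\int_{I_n}\rho_n\le c(\mu)n^{-4}$, as required. \textbf{The main obstacle} is the lower bound $|A(\zeta)|\gtrsim\sqrt{4-(e_n(\zeta-a_n))^2}$: since $A$ is the Jacobian of the subordination map, it must degenerate at the edge of the limiting semicircular support, and pinning down the \emph{precise} rate of this degeneration uniformly on $D_n$ requires coupling the integral representation of $F_\mu$ to the explicit free Meixner form of $M_n$ and to the sharp $T_n-M_n$ expansion of Theorem~\ref{th4a}. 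The $m_8$ hypothesis enters critically here through the control of $m_6(\tau)$ needed to make the tail $\tau$-contribution negligible against this degenerating coefficient.
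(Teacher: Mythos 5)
Your overall plan — compare the Cauchy transforms of $\mu_n$ and $\mu_n^*$ by splitting into a ``$S_n$ vs.\ $T_n$'' difference and a ``$\mu$ vs.\ $\mu^*$'' difference, bound the first via a subordination estimate and the second via a moment expansion, and collect a nonnegative remainder $\rho_n$ — is similar in spirit to the paper's, and your linear identity $(S_n-T_n)A(\zeta)=-\tfrac{n-1}{n}\int_{|u|>\sqrt{n-1}/\pi}\tau(du)/(S_n-u/\sqrt n)$ is a correct and attractive starting point. However two steps are genuine gaps.

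First, your lower bound $|A(\zeta)|\gtrsim\sqrt{4-(e_n(\zeta-a_n))^2}$ is circular. You ``identify $A$ with $1/S_n'$ up to $O(n^{-3})$,'' but $A$ contains the mixed factor $(S_n-u/\sqrt n)(T_n-u/\sqrt n)$, not $(S_n-u/\sqrt n)^2$. Writing out the difference, $A-1/S_n'$ contains a term $\tfrac{n-1}{n}\int_{|u|\le\sqrt{n-1}/\pi}\frac{(T_n-S_n)\,\tau(du)}{(S_n-u/\sqrt n)^2(T_n-u/\sqrt n)}$, which is $O(|S_n-T_n|)$ — precisely the quantity being estimated. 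The remaining tail term is $\le c\,\tau(\{|u|>\sqrt{n-1}/\pi\})/(\Im S_n)^2$, which near the edge is $\asymp n^{-9/5}$, not $O(n^{-3})$, and comparable with $|1/S_n'|\asymp n^{-3/5}$. Moreover, to control $\Im S_n$ and $1/S_n'$ you implicitly use $S_n\approx M_n$, which again runs through $S_n\approx T_n$. A bootstrap could plausibly resolve this, but nothing in the proposal sets it up. The paper sidesteps the circularity by working with the inverse maps $S_n^{(-1)},T_n^{(-1)}$, which have \emph{explicit} integral formulas (see Propositions~\ref{th4bpro1}--\ref{th4bpro1a}); the Lipschitz bound for $T_n^{(-1)}$ and the H\"older bound for $T_n$ (Proposition~\ref{th4bpro1b}, a consequence of Theorem~\ref{th4a}) then transfer the tail estimate to $|S_n(x)-T_n(x)|$ with no need to control the Jacobian $A$ of the forward map. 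That is the content of Theorem~\ref{th4b}, and the proof of Theorem~\ref{th4c} simply invokes it.

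Second, your construction of $\rho_n$ does not deliver what the statement requires. You propose to set $\rho_n$ equal to the absolute value of a Taylor remainder (after truncation by the Wang bound). Taking an absolute value destroys the \emph{equality} $p_n=p_n^*+\theta\,c(\mu)n^{-6/5}/\sqrt{\cdot}+\rho_n$: the remainder need not have a sign, so its absolute value will generally not sit additively in the identity. The paper's choice is structurally different and avoids this: it splits $G_{\mu_n}(z)=I_1(z)+I_2(z)$ with $I_2(z)=\int_{|u|>\sqrt{n-1}/3}\mu(du)/(S_n(z)-u/\sqrt n)$, observes that $I_2$ is itself the Cauchy transform of a finite nonnegative measure of total mass $\int_{|u|>\sqrt{n-1}/3}\mu(du)\le c(\mu)n^{-4}$ (Chebyshev with $m_8$), and sets $\rho_n:=p_{n2}$, the density of that measure. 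Then $\rho_n\ge 0$ and $\int\rho_n\le c(\mu)n^{-4}$ are immediate, and continuity of $\rho_n=p_n-p_{n1}$ follows from continuity of $p_n$ and of $p_{n1}$. The comparison of $p_{n1}$ with $p_n^*$ is then carried out by writing $p_n^*-p_{n1}=I_{3,1}+I_{3,2}$, with $I_{3,1}$ controlled by $|S_n-T_n|$ (using Theorem~\ref{th4b} and the lower bounds $|S_n(x)-u/\sqrt n|,|T_n(x)-u/\sqrt n|\ge 0.01$) and $I_{3,2}$ controlled by the moment closeness~\eqref{Pas5.1} using the finite algebraic expansion of $1/(T_n-u/\sqrt n)$ with an explicit rational remainder (so that no convergence radius is needed, a point your degree-$7$ ``Taylor expansion'' also glosses over). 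You should adopt this splitting to obtain the nonnegative $\rho_n$.
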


We prove Theorem~\ref{th4c}, using the following auxiliary result on the closeness 
of the functions $S_n(z)$ and $T_n(z)$.
\begin{theorem}\label{th4b}
Let $\mu$ satisfy the assumptions of Theorem~${\ref{th4c}}$. Then
there exists a constant $c_1(\mu)$ in $(\ref{asden1})$, such that 
for $x\in I_n$ and $n\ge n_1(\mu)$,
\begin{equation}\label{th4.3}
|S_n(x)-T_n(x)|\le \frac{c(\mu)}{n^{6/5}\sqrt{4-(e_n(x-a_n))^2}}. 
\end{equation}
\end{theorem}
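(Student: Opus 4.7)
The plan is to subtract the two scaled subordination identities for $S_n$ and $T_n$ and then invert. Setting $\widetilde F_\nu(w):=F_\nu(\sqrt n\,w)/\sqrt n$ and applying Proposition~\ref{3.3pro} under the change of variable $z\mapsto\sqrt n\,z$, one has
\begin{equation*}
z=nS_n(z)-(n-1)\widetilde F_\mu(S_n(z)), \qquad z=nT_n(z)-(n-1)\widetilde F_{\mu^*}(T_n(z)),\qquad z\in\mathbb{C}^+.
\end{equation*}
Subtract and write $\widetilde F_\mu(S_n)-\widetilde F_\mu(T_n)=(S_n-T_n)\int_0^1 \widetilde F_\mu'(T_n+t(S_n-T_n))\,dt$ (by holomorphy, since the segment joining $S_n(z),T_n(z)\in\mathbb{C}^+$ stays in $\mathbb{C}^+$). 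This yields the master identity
\begin{equation*}
(S_n(z)-T_n(z))\,A_n(z)=(n-1)\bigl(\widetilde F_\mu(T_n(z))-\widetilde F_{\mu^*}(T_n(z))\bigr),
\end{equation*}
with $A_n(z):=n-(n-1)\int_0^1 \widetilde F_\mu'(T_n(z)+t(S_n(z)-T_n(z)))\,dt$, valid on $\mathbb{C}^+$ and extending to the real axis by the continuity of Lemma~\ref{l7.4}. The task reduces to bounding the right-hand side from above and $|A_n(x)|$ from below for $x\in I_n$.

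For the right-hand side, the construction of $\mu^*$ in Section~5 gives the explicit formula
\begin{equation*}
\widetilde F_\mu(T_n(z))-\widetilde F_{\mu^*}(T_n(z))=\frac 1{\sqrt n}\int_{|u|>\sqrt{n-1}/\pi}\frac{\tau(du)}{u-\sqrt n\,T_n(z)}.
\end{equation*}
Since $m_8(\mu)<\infty$ forces $m_6(\tau)<\infty$ via Proposition~\ref{3.3^*pro}, Markov's inequality yields $\tau(\{|u|>\sqrt{n-1}/\pi\})\le c(\mu)\,n^{-3}$. Combining this with the trivial bound $|u-\sqrt n\,T_n(x)|\ge \sqrt n\,|\Im T_n(x)|$ together with the lower bound $|\Im T_n(x)|\ge c(\mu)\sqrt{4-(e_n(x-a_n))^2}$ (which follows from $T_n\approx M_n$ in Theorem~\ref{th4a} and the explicit form of $\Im M_n$ derived from (\ref{2.3h})), one obtains
\begin{equation*}
\bigl|(n-1)\bigl(\widetilde F_\mu(T_n(x))-\widetilde F_{\mu^*}(T_n(x))\bigr)\bigr|\le \frac{c(\mu)}{n^3\sqrt{4-(e_n(x-a_n))^2}}.
\end{equation*}

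For the denominator, differentiating $z=nT_n(z)-(n-1)\widetilde F_{\mu^*}(T_n(z))$ gives $1/T_n'(z)=n-(n-1)\widetilde F_{\mu^*}'(T_n(z))$, while direct differentiation of (\ref{2.3h}) shows $|M_n'(x)|\asymp 1/\sqrt{4-(e_n(x-a_n))^2}$ on $[a_n-2/e_n,a_n+2/e_n]$. Theorem~\ref{th4a} then yields $|n-(n-1)\widetilde F_{\mu^*}'(T_n(x))|\ge c(\mu)\sqrt{4-(e_n(x-a_n))^2}$, and the replacement of $\widetilde F_{\mu^*}'(T_n)$ by the averaged derivative inside $A_n$ contributes an error of order $c(\mu)/\{n^3(4-(e_n(x-a_n))^2)\}$ (the same $\tau$-truncation argument applied to the second derivative), which is negligible on $I_n$ since the cut-off in (\ref{asden1}) guarantees $4-(e_n(x-a_n))^2\ge 2c_1(\mu)n^{-6/5}\gg n^{-2}$. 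Dividing numerator by denominator then gives
\begin{equation*}
|S_n(x)-T_n(x)|\le \frac{c(\mu)}{n^3(4-(e_n(x-a_n))^2)}\le \frac{c(\mu)}{n^{6/5}\sqrt{4-(e_n(x-a_n))^2}},
\end{equation*}
where the final step uses $\sqrt{4-(e_n(x-a_n))^2}\ge c(\mu)n^{-3/5}$ on $I_n$, with $c_1(\mu)$ chosen large enough to absorb all constants.

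The hard part will be making the linearisation rigorous: the pointwise lower bound on $|A_n(x)|$ requires an a priori smallness of $S_n-T_n$ so that the segment $T_n+t(S_n-T_n)$ remains in a region where $\widetilde F_\mu'$ enjoys the needed quantitative estimate. I would handle this by a two-step bootstrap: first establish a coarse bound on $\{\Im z\ge n^{-1/2}\}$ (where $|\Im T_n|,|\Im S_n|\ge \Im z$ is trivial and Lemma~\ref{l7.5} gives uniform control of $|T_n|,|S_n|$), then descend to the real axis using the Nevanlinna structure of $\widetilde F_\mu,\widetilde F_{\mu^*}$ together with the boundary continuity from Lemma~\ref{l7.4}, and finally feed the crude bound back into the master identity on $I_n$ to upgrade it to the sharp estimate.
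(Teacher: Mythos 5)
Your master-identity strategy (subtract the two scaled functional equations, linearise, and invert) is a genuinely different route from the paper's. The paper never touches derivatives of $T_n$: it works with the explicit formulas for $S_n^{(-1)}$ and $T_n^{(-1)}$ (Propositions~\ref{th4bpro1}, \ref{th4bpro1a}), which directly expose the $\tau$-tail, and then transfers closeness of preimages into closeness of $T_n$-values via a modulus-of-continuity bound for $T_n$ (Proposition~\ref{th4bpro1b}) together with a careful comparison of the boundary curves $\hat\gamma_n$ and $\hat\gamma_n^*$ (Propositions~\ref{th4bpro3}, \ref{th4bpro4}). Your approach trades that geometry for a quotient of two estimates; the numerator bound via $\tau(\{|u|>\sqrt{n-1}/\pi\})\le c(\mu)n^{-3}$ and $|\Im T_n(x)|\gtrsim\sqrt{4-(e_n(x-a_n))^2}$ is sound, but the denominator is where the proposal falls short.

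Concretely, the step ``Theorem~\ref{th4a} then yields $|n-(n-1)\widetilde F_{\mu^*}'(T_n(x))|\ge c(\mu)\sqrt{4-(e_n(x-a_n))^2}$'' is asserted, not proved: Theorem~\ref{th4a} gives a pointwise estimate for $T_n-M_n$, and converting it to a derivative estimate via Cauchy's formula requires a circle of radius comparable to the distance to the edge, which is exactly where the Theorem~\ref{th4a} error blows up; the resulting derivative bound degrades precisely in the critical region near $\partial I_n$. Second, the replacement of the averaged derivative $\int_0^1\widetilde F_\mu'(T_n+t(S_n-T_n))\,dt$ by $\widetilde F_{\mu^*}'(T_n)$ costs an error of order $(n-1)\,|S_n-T_n|\,\sup|\widetilde F_\mu''|\asymp|S_n-T_n|/(\Im T_n)^3$; at the endpoints of $I_n$, where $4-(e_n(x-a_n))^2\asymp n^{-6/5}$ and the target bound for $|S_n-T_n|$ is of order $n^{-3/5}$, this error is not small relative to the claimed lower bound on $|A_n|$, so the bootstrap does not close. (Using the resolvent identity $\widetilde F_\mu(S_n)-\widetilde F_\mu(T_n)=(S_n-T_n)\bigl[1+\int\frac{\tau(du)}{(u-\sqrt nS_n)(u-\sqrt nT_n)}\bigr]$ in place of the Taylor-averaged derivative avoids the $\widetilde F''$ loss and lets one exploit the curve equations $(n-1)\int\tau(du)/|u-\sqrt nS_n|^2=1$ and $(n-1)\int\tau_{\mathrm{trunc}}(du)/|u-\sqrt nT_n|^2=1$ directly, yielding a lower bound $\Re B_n\gtrsim 4-(e_n(x-a_n))^2$, which is weaker than what you claim but, because of the $n^{-3}$ in the numerator, still suffices on $I_n$.) Finally, the ``descend to the real axis'' step is unexplained: $S_n-T_n$ is not a Nevanlinna function, and neither the maximum principle nor a Poisson-kernel argument immediately transfers an interior bound to the boundary, which is precisely where Theorem~\ref{th4b} needs it.
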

We shall prove this theorem in the next section. Return to the proof of Theorem~\ref{th4c}.

\begin{proof}
Represent the density $p_{n}(x)$ of the measure $\mu_n$ in the form
\begin{equation}\label{th4.4}
p_{n}(x)=p_{n1}(x)+p_{n2}(x),\quad x\in\mathbb R, 
\end{equation}
where $p_{nj}(x)\ge 0,\,x\in\mathbb R,\,j=1,2$, and, for $z\in\mathbb C^+$,
\begin{align}
I_1(z)&:=\int_{|u|\le \sqrt{n-1}/3}\frac{\mu(du)}{S_n(z)-u/\sqrt n}=\int_{\mathbb R}\frac{p_{n1}(u)\,du}{z-u},\notag\\
I_2(z)&:=\int_{|u|>\sqrt{n-1}/3}\frac{\mu(du)}{S_n(z)-u/\sqrt n}=\int_{\mathbb R}\frac{p_{n2}(u)\,du}{z-u}.\notag
\end{align} 
Since $\lim_{y\to+\infty}iyI_2(iy)=\int_{|u|>\sqrt{n-1}/3}\mu(du)=\int_{\mathbb R}p_{n2}(u)\,du$, we note that
\begin{equation}\label{th4.5}
\int_{\mathbb R}p_{n2}(u)\,du\le c(\mu)n^{-4}. 
\end{equation}
Since $S_n(x),\,x\in\mathbb R$, is a continuous function and $|S_n(x)|\ge 1.03/3$
for all $x\in\mathbb R$, 
we easily see that, for $x\in\mathbb R$,
\begin{align}\notag
p_{n1}(x)=-\Im \int_{|u|\le \sqrt{n-1}/3}\frac{\mu(du)}{S_n(x)-u/\sqrt n} 
\end{align}
and that $p_{n1}(x)$ is a continuous function on the real line. In view of (\ref{asden3}), 
$p_{n}(x)$ is a continuous function on the real line and
$p_n(x)\le c(\mu),\,x\in\mathbb R$, 
for $n\ge n_1(\mu)$. Therefore we conclude from (\ref{th4.4}) that $p_{n2}(x)$ is a continuous function 
on the real line and $p_{n2}(x)\le c(\mu),\,x\in\mathbb R$, for the same $n$.

Now we may write
\begin{align}
p_{n}^*(x)-p_{n1}(x)&=I_{3,1}(x)+I_{3,2}(x)\notag\\
&:=\Im\Big(\int_{|u|\le \sqrt{n-1}/3}\frac{\mu(du)}{S_n(x)-u/\sqrt n}-
\int_{|u|\le \sqrt{n-1}/3}\frac{\mu(du)}{T_n(x)-u/\sqrt n}\Big)\notag\\
&+\Im\int_{|u|\le \sqrt{n-1}/3}\frac{(\mu-\mu^*)(du)}{T_n(x)-u/\sqrt n}. \label{th4.6^*}
\end{align}
By Theorem~\ref{th4b} and the inequalities $|S_n(x)-u/\sqrt n|\ge 0.01$, $|T_n(x)-u/\sqrt n|\ge 0.01$
for $x\in\mathbb R$ and $|u|\le\sqrt{n-1}/3$, we conclude that, for $x\in I_n$, 
\begin{equation}\label{th4.6}
|I_{3,1}(x)|\le \int_{|u|\le\sqrt{n-1}/3}\frac{|S_n(x)-T_n(x)|\,\mu(du)}{|S_n(x)-u/\sqrt n||T_n(x)-u/\sqrt n|}
\le \frac{c(\mu)}{n^{6/5}\sqrt{4-(e_n(x-a_n))^2}}.
\end{equation}
On the other hand we note that, for $x\in\mathbb R$,
\begin{equation}\notag
I_{3,2}(x)=\Im\Big(\frac{m_2(\mu)-m_2(\mu^*)}{T_n^3(x)n}+\frac{m_3(\mu)-m_3(\mu^*)}{T_n^4(x)n^{3/2}}
+\frac 1{T_n^4(x)n^{2}}\int\limits_{|u|\le \sqrt{n-1}/3}\frac{u^4(\mu-\mu^*)(du)}{T_n(x)-u/\sqrt n}\Big). 
\end{equation}
Using (\ref{Pas5.1}), we obtain
\begin{equation}\label{th4.7^*}
|I_{3,2}(x)|\le \frac{c(\mu)}{n^{2}},\quad x\in\mathbb R. 
\end{equation}
Applying (\ref{th4.6}) and (\ref{th4.7^*}) to (\ref{th4.6^*}), we have, for $x\in I_n$, 
\begin{equation}\label{th4.8}
|p_{n}^*(x)-p_{n1}(x)|\le \frac{c(\mu)}{n^{6/5}\sqrt{4-(e_n(x-a_n))^2}}. 
\end{equation}
It remains to note that the statement of the theorem immediately follows from (\ref{th4.4}), (\ref{th4.5})
and (\ref{th4.8}).

\end{proof}

\section{Proof of Theorem~\ref{th4a}}

In this section we show how using arguments of Section 7 from \cite{ChG:2011}
one can obtain a~proof of Theorem~\ref{th4a}. 

Repeating the arguments of Subsection~7.2 we deduce that $T_n(z)$ satisfies the functional equation,
for $z\in\mathbb C^+$,
\begin{equation}\label{5.5}
T_n^5(z)-zT_n^4(z)+m_2(\mu^*)T_n^3(z)
+\frac{\zeta_{n2}(z)}{\sqrt n}T_n^2(z)
+\frac{\zeta_{n3}(z)}n T_n(z)-\frac{\zeta_{n4}(z)z}{n^2} =0,
\end{equation}
where 
$
\zeta_{n1}(z):=\int_{\mathbb R}\frac{u^5\,\mu^*(du)}{W(\sqrt nz)-u},
$
$\zeta_{n2}(z):=m_3(\mu^*)-z/\sqrt n$, $\zeta_{n3}(z)(z):=m_4(\mu^*)+\zeta_{n1}(z)-zm_3(\mu^*)/\sqrt n$ and
$\zeta_{n4}(z)(z):=m_4(\mu^*)+\zeta_{n1}(z)$.
As in Subsection~7.3 from \cite{ChG:2011} we obtain estimates for the functions $\zeta_{nj}(z),j=1,2,3,4$ in the domain
$D^*:=\{|\Re z|\le 4,\,0<\Im z\le 3\}$
\begin{equation}\label{5.5a}
|\zeta_{n1}(z)|\le c(\mu)n^{-1/2},\quad \sum_{j=2}^4 |\zeta_{nj}(z)|\le c(\mu).
\end{equation}

For every fixed $z\in\mathbb C^+$
consider the~equation
\begin{equation}\label{5.4c}
Q(z,w):=w^5-zw^4+m_2(\mu^*)w^3+\frac{\zeta_{n2}(z)}{\sqrt n} w^2
+\frac{\zeta_{n3}(z)}n w-\frac{\zeta_{n4}(z)z}{n^2} =0.
\end{equation}
Denote the~roots of the equation (\ref{5.4c}) by $w_j=w_j(z),\,j=1,\dots,5$.

As in Subsection~7.4 \cite{ChG:2011} we can show that for every fixed $z\in D^*$ 
the~equation $Q(z,w)=0$ 
has three roots, say $w_j=w_j(z),\,j=1,2,3$, such that
\begin{equation}\label{5.6}
|w_j|<r':=c_3(\mu) n^{-1/2},
\quad j=1,2,3,
\end{equation}
and two roots, say $w_j,\,j=4,5$, such that $|w_j|\ge r'$ for $j=4,5$.

Represent $Q(z,w)$ in the~form
$$
Q(z,w)=(w^2+s_1w+s_2)(w^3+g_1w^2+g_2w+g_3),
$$
where $w^3+g_1w^2+g_2w+g_3=(w-w_1)(w-w_2)(w-w_3)$.
From this formula we derive the~relations
\begin{align}\label{5.7}
s_1+g_1=-z,\quad &s_2+s_1g_1+g_2=m_2(\mu^*),\quad s_2g_1+s_1g_2+g_3=
\frac{\zeta_{n2}(z)}{\sqrt n},
\notag \\
&s_2g_2+s_1g_3=\frac {\zeta_{n3}(z)}n,\quad s_2g_3=-\frac{\zeta_{n4}(z)z}{n^2}.
\end{align}
By Vieta's formulae and (\ref{5.6}), note that
\begin{equation}\label{5.8a}
|g_1|\le 3r',\quad |g_2|\le 3(r')^2,\quad
|g_3|\le (r')^3.
\end{equation}
Now we obtain from (\ref{5.7}) and (\ref{5.8a}) the~following bounds,
for $z\in D_1$,
\begin{equation}\label{5.8}
|s_1|\le 5+3r',\quad |m_2(\mu^*)-s_2|\le 3r'(4r'+5)\le 16r'\le\frac 12.
\end{equation}
Then we conclude from 
(\ref {Pas5.1}), (\ref{5.5a}), (\ref{5.7})--(\ref{5.8}) that, for the~same $z$,
\begin{align}\label{5.9}
\Big|g_2-\frac{\zeta_{n4}(z)}{n}\Big| &\le \Big|g_2-\frac{\zeta_{n3}(z)}{n}\Big|+\frac{|m_3(\mu^*)||z|}{n^{3/2}}
\le \frac{|s_1|}{|s_2|}|g_3|+\frac{|s_2-m_2(\mu^*)|}{|s_2|}\frac{|\zeta_{n3}(z)|}n+(r')^3\notag\\
&\le 11(r')^3+8(r')^3+(r')^3=20(r')^3\le c(\mu)n^{-3/2}.
\end{align}
Now repeating the arguments of Subsection~7.4 we deduce the inequality
\begin{equation}\label{5.10}
 |g_1-a_n-b_n z|\le c(\mu)n^{-3/2}, \quad z\in D^*.
\end{equation}
To find the~roots $w_4$ and $w_5$, we need to solve the~equation
$w^2+s_1w+s_2=0$. Using (\ref{5.7}), we have, 
for $j=4,5$,
\begin{align}\label{5.11}
w_j&=\frac 12\Big(-s_1+(-1)^j\sqrt{s_1^2-4s_2}\Big)\notag\\&=\frac 12\Big(
z+g_1+(-1)^j\sqrt{(z+g_1)^2-4(m_2(\mu^*)+(z+g_1)g_1-g_2)}\Big)\notag\\
&=\frac 12\Big(z+g_1+(-1)^j\sqrt{(z-g_1)^2-4m_2(\mu^*)-4(g_1^2-g_2)}\Big)
=\frac 12 r_{n3}(z)+a_n+\notag\\
&+\frac 12\Big(\big(1+b_n\big)(z-a_n)
+(-1)^j\sqrt{\big(1-b_n\big)^2(z-a_n)^2-4\big(1-d_n\big)
+r_{n4}(z)}\Big),
\end{align}
where 
\begin{align}
r_{n3}(z)&:=g_1-a_n-b_n(z-a_n),\notag\\
r_{n4}(z)&:=
-3r_{n3}^2(z)-2r_{n3}(z)(4a_n+(1+3b_n)(z-a_n))+4(1-m_2(\mu^*))\notag\\
&+4(g_2-m_4(\mu)/n)-4b_n(z-a_n)(2a_n+b_n(z-a_n)).\notag 
\end{align}

The quantities $r_{n3}(z)$ and $r_{n4}(z)$ admit the bound (see Subsection~7.4 and 7.5 from \cite{ChG:2011})
\begin{equation}\label{5.12}
|r_{n3}(z)|+|r_{n4}(z)|\le c(\mu)n^{-3/2}, \quad z\in D^*.
\end{equation}
We choose the~branch of the~analytic square root according to the~condition $\Im w_4(i)\ge 0$.

As in Subsection~7.6 from \cite{ChG:2011} we prove that $w_4(z)=S_n(z)$ for $z\in D_n$, where $h=c_2(\mu)n^{-3/2}$
with some sufficiently large constant $c_2(\mu)$ which does not depend on the constant $c(\mu)$ in
(\ref{5.12}). Since the constant $c_2(\mu)$ is sufficiently large, we have, by (\ref{5.12}),
\begin{equation}\label{5.13}
|r_{n3}(z)|/|((1-b_n)^2(z-a_n)^2-4(1-d_n)|\le 10^{-2},\quad z\in D_n. 
\end{equation}

For $z\in D_n$, using formula (\ref{5.11}) with $j=4$ for $S_n(z)$, we write
\begin{align}\label{5.14}
&M_n(z)-S_n(z)=-\frac 12 r_{n3}(z)\notag\\
&-\frac 12\frac{r_{n4}(z)}{\sqrt{(1-b_n)^2(z-a_n)^2-4(1-d_n)}
+\sqrt{(1-b_n)^2(z-a_n)^2-4(1-d_n)+\zeta_{n6}(z)}}. 
\end{align}
Using (\ref{5.13}) and the power expansion for the function $(1+z)^{1/2},\, |z|<1$, we easily
rewrite (\ref{5.14}) in the form
\begin{align}\label{5.15}
M_n(z)-S_n(z)=\frac{r_{n5}(z)}{\sqrt{(1-b_n)^2(z-a_n)^2-4(1-d_n)}},
\quad z\in D_n, 
\end{align}
where $|r_{n5}(z)|\le c(\mu)n^{-3/2}$. The relation (\ref{th4.1}) immediately follows from (\ref{5.15}). 

Using (\ref{th4.2^*}), we conclude that
\begin{equation}\label{5.2}
G_{\mu_n^*}(z)=\frac 1{T_n(z)}+\frac 1{nT_n^3(z)}
+\frac 1{n^{3/2}T_n^3(z)}
\int_{[-\sqrt{n-1}/3,\sqrt{n-1}/3]}\frac{u^4\,\mu^*(du)}{T_n(z)-u/\sqrt n},\quad z\in\mathbb C^+.
\end{equation}
Since $|T_n(z)|\ge 1.03/3$ for $z\in \mathbb C^+$ and $n\ge n_1(\mu)$, we arrive at (\ref{th4.2}).

\section{Proof of Theorem~\ref{th4b}}

In order to prove this theorem we need the following auxiliary results.
\begin{proposition}\label{th4bpro1}
For $z_1,z_2\in T_n(\mathbb C^+)$,
\begin{equation}\notag
|T_n^{(-1)}(z_1)-T_n^{(-1)}(z_2)|\le c(\mu)|z_1-z_2|,\quad n\ge n_1(\mu).
\end{equation} 
\end{proposition}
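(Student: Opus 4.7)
The plan is to invert the subordination equation (\ref{3.10}) explicitly and then factor out $(z_1-z_2)$ from the resulting difference. Since $T_n(z)=W(\sqrt{n}\,z)/\sqrt{n}$ and $W$ satisfies $u = nW(u)-(n-1)F_{\mu^*}(W(u))$ for $u\in\C^+$, substituting $u=\sqrt{n}\,z$ and $\zeta=T_n(z)$ yields the closed form
\begin{equation*}
T_n^{(-1)}(\zeta) = n\zeta - \frac{n-1}{\sqrt{n}}\,F_{\mu^*}(\sqrt{n}\,\zeta),\qquad \zeta\in T_n(\C^+).
\end{equation*}

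Next, by Proposition~\ref{3.3^*pro} applied to $\mu^*$, one has $F_{\mu^*}(w)=w+\int_{\R}\tau^*(du)/(u-w)$, where (by the construction of $\mu^*$ in Section~5) the measure $\tau^*$ is supported in $[-\sqrt{n-1}/\pi,\sqrt{n-1}/\pi]$ and $\tau^*(\R)=m_2(\mu^*)\le 1$. Substituting this into the inverse formula and combining the two fractions via the identity $1/(u-\sqrt{n}z_1)-1/(u-\sqrt{n}z_2)=\sqrt{n}(z_1-z_2)/((u-\sqrt{n}z_1)(u-\sqrt{n}z_2))$ produces the clean algebraic identity
\begin{equation*}
T_n^{(-1)}(z_1)-T_n^{(-1)}(z_2) = (z_1-z_2)\bigg(1 - (n-1)\int_{\R}\frac{\tau^*(du)}{(u-\sqrt{n}\,z_1)(u-\sqrt{n}\,z_2)}\bigg).
\end{equation*}
The Lipschitz bound therefore reduces to a uniform estimate of the bracketed factor.

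For this uniform bound the key input is Lemma~\ref{l7.5} applied to $\mu^*$. Its hypothesis is essentially free here: since $\operatorname{supp}(\mu^*)\subset[-\sqrt{n-1}/3,\sqrt{n-1}/3]$ and $1/3<1/\sqrt{8}$, the tail integral $\int_{|u|>\sqrt{(n-1)/8}}u^2\mu^*(du)$ vanishes, while the minor discrepancy $m_2(\mu^*)=1+O(n^{-3})$ (from (\ref{Pas5.1})) is absorbed by a trivial rescaling of Lemma~\ref{l7.5}. This gives $|\sqrt{n}\,z_j|\ge\sqrt{(n-1)/8}$ for $z_j\in T_n(\C^+)$, which combined with $|u|\le\sqrt{n-1}/\pi$ on the support of $\tau^*$ yields $|u-\sqrt{n}\,z_j|\ge (1/\sqrt{8}-1/\pi)\sqrt{n-1}$. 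Hence the integral is bounded in modulus by $\tau^*(\R)/((1/\sqrt{8}-1/\pi)^2(n-1))$, so the factor $(n-1)$ cancels and the bracketed quantity is at most some $c(\mu)$ uniformly in $n\ge n_1(\mu)$ and in $z_1,z_2\in T_n(\C^+)$.

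The argument is essentially mechanical once the inverse is written out; the only point requiring a brief remark is the applicability of Lemma~\ref{l7.5} to $\mu^*$, which is however immediate from the truncation $|u|\le\sqrt{n-1}/\pi$ used to define $\mu^*$, since this truncation is precisely what forces the tail condition to hold trivially and guarantees the needed separation between $\operatorname{supp}\tau^*$ and the image values $\sqrt{n}\,T_n(\C^+)$.
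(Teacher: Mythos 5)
Your proof is correct and follows essentially the same route as the paper: write the explicit formula for $T_n^{(-1)}$ via the subordination equation, factor out $(z_1-z_2)$ from the difference, and then invoke Lemma~\ref{l7.5} to bound each $|u-\sqrt{n}\,z_j|$ from below by a constant multiple of $\sqrt{n}$ on the support of the truncated measure, which cancels the factor $n-1$. You are in fact slightly more careful than the paper in spelling out why Lemma~\ref{l7.5} applies to $\mu^*$ despite $m_2(\mu^*)$ being only approximately $1$.
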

\begin{proof}
Using the formula
\begin{equation}\label{th4bpro1.1}
 T_n^{(-1)}(z)=nz-\frac{n-1}{\sqrt n}F_{\mu^*}(z\sqrt n)=z-\frac{n-1}{\sqrt n}\int_{-\sqrt{n-1}/\pi}^{\sqrt{n-1}/\pi}
\frac{\tau(du)}{u-z\sqrt n},\quad z\in\mathbb C^+,
\end{equation}
we have the relation, for $z_1,z_2\in\mathbb C^+$,
\begin{equation}\notag
T_n^{(-1)}(z_1)-T_n^{(-1)}(z_2)=(z_1-z_2)\Big(1-(n-1)\int_{-\sqrt{n-1}/\pi}^{\sqrt{n-1}/\pi}
\frac{\tau(du)}{(u-z_1\sqrt n)(u-z_2\sqrt n)}\Big). 
\end{equation}
Since, by Lemma~3.5, $|u-z\sqrt n|\ge 10^{-2}\sqrt{n}$ for $|u|\le \sqrt{n-1}/\pi$ and $z\in T_n(\mathbb C^+)$, we 
immediately arrive at the assertion of the proposition.
\end{proof}

Recalling the definition of the function $Z(z)$, we see, by Lemma~\ref{l7.4}, that the function $S_n(z)$ 
maps $\mathbb C^+$ conformally onto $\hat{D}_n$, where 
$\hat{D}_n:=\{z=x+iy,x,y\in\mathbb R:y>y_n(x\sqrt n)/\sqrt n\}$. Denote by $\hat{\gamma}_n$ a curve 
given by the equation $z=x+i\hat{y}_n(x)$, where $\hat{y}_n(x)=y_n(x\sqrt n)/\sqrt n$. The function $S_n(z)$ 
is continuous up to the real axis and it establishes a~homeomorphism between 
the real axis and the curve $\hat{\gamma}_n$.

Note as well that the function $T_n(z)$ maps $\mathbb C^+$ conformally onto $\hat{D}_n^*$,  
where $\hat{D}_n^*:=\{z=x+iy,x,y\in\mathbb R:y>y_n^*(x\sqrt n)/\sqrt n\}$. Here $y_n^*(x)$ 
is defined in the same way as $y_n(x)$ if we change the measure $\mu$ by $\mu^*$. 
By definition of the measure $\mu^*$, we see that $y_n^*(x)\le y_n(x)$. Hence 
$S_n(\mathbb C^+)\subseteq T_n(\mathbb C^+)$. Denote by $\hat{\gamma}_n^*$ a curve 
given by the equation $z=x+i\hat{y}_n^*(x),\,x\in\mathbb R$, where $\hat{y}_n^*(x)=y_n^*(x\sqrt n)/\sqrt n$. 
The function $T_n(z)$ 
is continuous up to the real axis and it establishes a~homeomorphism between 
the real axis and the curve $\hat{\gamma}_n^*$. Moreover, if $x_1<x_2$, then $T_n(x_1)<T_n(x_2)$.

\begin{proposition}\label{th4bpro1a}
For $z\in S_n(\mathbb C^+)$,
\begin{equation}\notag
|T_n^{(-1)}(z)-S_n^{(-1)}(z)|\le \frac{\tau(\{|u|>\sqrt{n-1}/\pi\})}{\Im z},\quad n\ge n_1(\mu).
\end{equation} 
\end{proposition}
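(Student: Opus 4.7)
The plan is to exploit the fact that the subordination equation (\ref{3.10}) gives an \emph{explicit} formula for the inverse subordination functions, so $T_n^{(-1)}-S_n^{(-1)}$ reduces to the difference $F_{\mu^*}-F_\mu$, which in turn is controlled by the truncated portion of the measure $\tau$.

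First, I would apply Proposition~\ref{3.3pro} to $\mu$. From $z=nZ(z)-(n-1)F_{\mu}(Z(z))$, substituting $z\mapsto\sqrt n z$ and dividing by $\sqrt n$ yields
\begin{equation}\notag
z=nS_n(z)-\frac{n-1}{\sqrt n}F_{\mu}(\sqrt n\,S_n(z)),\qquad z\in\mathbb C^+.
\end{equation}
Hence for any $w\in S_n(\mathbb C^+)$,
\begin{equation}\notag
S_n^{(-1)}(w)=nw-\frac{n-1}{\sqrt n}F_{\mu}(\sqrt n\,w).
\end{equation}
The same argument applied to $\mu^*$ (noting that by construction $\mu^*$ has bounded support and hence $T_n$ is its normalized subordination function) gives
\begin{equation}\notag
T_n^{(-1)}(w)=nw-\frac{n-1}{\sqrt n}F_{\mu^*}(\sqrt n\,w),\qquad w\in T_n(\mathbb C^+).
\end{equation}
Since $S_n(\mathbb C^+)\subseteq T_n(\mathbb C^+)$, both formulas are valid on $S_n(\mathbb C^+)$.

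Next, by Proposition~\ref{3.3^*pro} and the definition of $\mu^*$ in Section~5,
\begin{equation}\notag
F_{\mu}(\zeta)-F_{\mu^*}(\zeta)=\int_{|u|>\sqrt{n-1}/\pi}\frac{\tau(du)}{u-\zeta},\qquad \zeta\in\mathbb C^+.
\end{equation}
Subtracting the two expressions for the inverses therefore produces
\begin{equation}\notag
T_n^{(-1)}(w)-S_n^{(-1)}(w)=\frac{n-1}{\sqrt n}\int_{|u|>\sqrt{n-1}/\pi}\frac{\tau(du)}{u-\sqrt n\,w}.
\end{equation}

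Finally, for real $u$ and $w\in\mathbb C^+$ one has $|u-\sqrt n\,w|\ge\sqrt n\,\Im w$, so that
\begin{equation}\notag
|T_n^{(-1)}(w)-S_n^{(-1)}(w)|\le\frac{n-1}{n\,\Im w}\,\tau\bgl(\{|u|>\sqrt{n-1}/\pi\}\bgr)\le\frac{\tau(\{|u|>\sqrt{n-1}/\pi\})}{\Im w}.
\end{equation}
There is no real obstacle here; the only point requiring care is the verification that the formula $T_n^{(-1)}(w)=nw-\frac{n-1}{\sqrt n}F_{\mu^*}(\sqrt n w)$ holds throughout $S_n(\mathbb C^+)$, which follows from the inclusion $S_n(\mathbb C^+)\subseteq T_n(\mathbb C^+)$ established just before the statement.
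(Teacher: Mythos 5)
Your proof is correct and takes essentially the same route as the paper: both use the explicit inversion formulas $S_n^{(-1)}(w)=nw-\frac{n-1}{\sqrt n}F_{\mu}(\sqrt n\,w)$ and $T_n^{(-1)}(w)=nw-\frac{n-1}{\sqrt n}F_{\mu^*}(\sqrt n\,w)$ (the paper's equations (8.1) and (8.2)), observe that their difference is the tail integral $\frac{n-1}{\sqrt n}\int_{|u|>\sqrt{n-1}/\pi}\frac{\tau(du)}{u-\sqrt n\,w}$, and bound it using $|u-\sqrt n\,w|\ge\sqrt n\,\Im w$ together with $\frac{n-1}{n}\le 1$. The only added content in your write-up is the explicit derivation of those inversion formulas from the subordination equation, which the paper simply cites.
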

\begin{proof}
Using (\ref{th4bpro1.1}) and the formula
\begin{equation}\label{th4bpro3.1}
S_n^{(-1)}(z)=nz-\frac{n-1}{\sqrt n}F_{\mu}(z\sqrt n)=z-\frac{n-1}{\sqrt n}\int_{\mathbb R}
\frac{\tau(du)}{u-z\sqrt n},\quad z\in\mathbb C^+,
\end{equation} 
we have, taking into account that $S_n(\mathbb C^+)\subseteq T_n(\mathbb C^+)$,
\begin{equation}\notag
|S_n^{(-1)}(z)-T_n^{(-1)}(z)|=\frac{n-1}{\sqrt n}\Big|\int_{|u|>\sqrt{n-1}/\pi}\frac{\tau(du)}{u-z\sqrt n}\Big| \le
\frac{\tau(\{|u|>\sqrt{n-1}/\pi\})}{\Im z} 
\end{equation}
for $z\in S_n(\mathbb C^+)$, proving the proposition.
\end{proof}
\begin{proposition}\label{th4bpro1b}
For $x_1,x_2\in \hat{I}_n$, we have the estimate 
\begin{equation}\notag
|T_n(x_1)-T_n(x_2)|\le c(\mu)\frac{|x_1-x_2|+n^{-3/2}}
{\min_{j=1,2}\{\sqrt{4-(e_n(x_j-a_n))^2}\}}.
\end{equation}
 
\end{proposition}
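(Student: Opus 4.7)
The plan is to compare $T_n$ with the reciprocal Cauchy transform $M_n$ of the free Meixner approximant and exploit the fact that $M_n$ has an explicit algebraic form. Writing
\begin{equation}\notag
T_n(x_1)-T_n(x_2)=\big(M_n(x_1)-M_n(x_2)\big)+\big(T_n(x_1)-M_n(x_1)\big)-\big(T_n(x_2)-M_n(x_2)\big),
\end{equation}
Theorem~\ref{th4a} (applied in the limit $\Im z\downarrow 0$, using the continuity of $T_n$ and $M_n$ up to the real axis by Lemma~\ref{l7.4}) immediately controls the last two terms: for $x\in\hat{I}_n$ one has $|T_n(x)-M_n(x)|\le c(\mu)n^{-3/2}/\sqrt{4-(e_n(x-a_n))^2}$, since on $\hat{I}_n$ the quantity $(e_n(x-a_n))^2-4$ is negative and $|\sqrt{(e_n(x-a_n))^2-4}|=\sqrt{4-(e_n(x-a_n))^2}$. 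Summing the two remainder contributions and using $\max\ge\min$ supplies a bound of the form $c(\mu)n^{-3/2}/\min_{j=1,2}\sqrt{4-(e_n(x_j-a_n))^2}$.

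It remains to estimate $|M_n(x_1)-M_n(x_2)|$ from the explicit formula for $M_n$. Writing $A(z):=(1-b_n)^2(z-a_n)^2-4(1-d_n)$, one has $M_n(z)=a_n+\frac 12(1+b_n)(z-a_n)+\frac 12\sqrt{A(z)}$. The linear part contributes at most $c|x_1-x_2|$. For the square-root part, I use the identity
\begin{equation}\notag
\sqrt{A(x_1)}-\sqrt{A(x_2)}=\frac{A(x_1)-A(x_2)}{\sqrt{A(x_1)}+\sqrt{A(x_2)}},
\end{equation}
noting that for $x_j\in\hat{I}_n$ both $\sqrt{A(x_j)}$ are purely imaginary with positive imaginary part (by the branch choice for $M_n$), so the denominator has absolute value equal to $\sqrt{1-d_n}\bigl(\sqrt{4-(e_n(x_1-a_n))^2}+\sqrt{4-(e_n(x_2-a_n))^2}\bigr)$, which is bounded below by $c\max_j\sqrt{4-(e_n(x_j-a_n))^2}$. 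The numerator factors as $(1-b_n)^2(x_1-x_2)(x_1+x_2-2a_n)$, and since $|x_j-a_n|\le 2/e_n\le c$ on $\hat{I}_n$, it is bounded by $c|x_1-x_2|$.

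Combining, $|M_n(x_1)-M_n(x_2)|\le c(\mu)|x_1-x_2|\bigl(1+1/\max_j\sqrt{4-(e_n(x_j-a_n))^2}\bigr)$. Since $\min_j\sqrt{4-(e_n(x_j-a_n))^2}\le 2$, the constant term $1$ is absorbed into $c(\mu)/\min_j\sqrt{\cdots}$, and the $1/\max$ term is trivially dominated by $1/\min$. Adding this to the remainder estimate from the first paragraph yields the desired bound
\begin{equation}\notag
|T_n(x_1)-T_n(x_2)|\le c(\mu)\,\frac{|x_1-x_2|+n^{-3/2}}{\min_{j=1,2}\sqrt{4-(e_n(x_j-a_n))^2}}.
\end{equation}

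The only subtle point is the branch choice for $\sqrt{A(z)}$ at the real axis: one must verify that the two values $\sqrt{A(x_1)}$ and $\sqrt{A(x_2)}$ lie in the same half-plane so that their sum does not cancel, which follows from the defining condition $\Im z>0\Rightarrow\Im M_n(z)\ge 0$ and continuity. Everything else is a direct calculation from Theorem~\ref{th4a} and the Meixner formula.
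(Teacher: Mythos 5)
Your proof is correct and takes essentially the same route as the paper: both decompose $T_n(x_1)-T_n(x_2)$ via $M_n$ using Theorem~\ref{th4a}, compute $M_n(x_1)-M_n(x_2)$ explicitly with the difference-of-square-roots identity, and bound the resulting denominator from below using the fact that on $\hat I_n$ both square roots are purely imaginary with the same sign. The only cosmetic difference is that the paper writes the lower bound on the denominator with a harmless factor $\tfrac12$ where you note equality; this changes nothing.
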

\begin{proof}
By Theorem~\ref{th4a}, we have the following relation
\begin{align}\label{th4bpro1b.1}
T_n(x_1)-T_n(x_2)=M_n(x_1)-M_n(x_2)+\frac{r_{n1}(x_1)n^{-3/2}}{\sqrt{(e_n(x_1-a_n))^2-4}}
-\frac{r_{n1}(x_2)n^{-3/2}}{\sqrt{(e_n(x_2-a_n))^2-4}},
\end{align}
where $x_1,x_2\in \hat{I}_n$.
On the other hand it is easy to see that
\begin{align}\label{th4bpro1b.2}
&M_n(x_1)-M_n(x_2)=\frac{(1+b_n)(x_1-x_2)}2\notag\\ 
&+\frac 12\frac{(1-b_n)^2(x_1-x_2)(x_1+x_2-2a_n)}
{\sqrt{(1-b_n)^2(x_1-a_n)^2-4(1-d_n)}+\sqrt{(1-b_n)^2(x_2-a_n)^2-4(1-d_n)}}.
\end{align} 
Moreover, we have, for $x_1,x_2\in \hat{I}_n$,
\begin{align}
&\Big|\sqrt{(1-b_n)^2(x_1-a_n)^2-4(1-d_n)}
+\sqrt{(1-b_n)^2(x_2-a_n)^2-4(1-d_n)}\Big|\notag\\
&\ge\frac 12(|\sqrt{4(1-d_n)-(1-b_n)^2(x_1-a_n)^2}|+|\sqrt{4(1-d_n)-(1-b_n)^2(x_2-a_n)^2}|). \notag
\end{align} 
In view of this bound and (\ref{th4bpro1b.1}), (\ref{th4bpro1b.2}), we easily obtain the assertion of the proposition.
\end{proof}

\begin{proposition}\label{th4bpro1c}
For $x\in \hat{I}_n$, 
the following formula holds
\begin{align}\notag
T_n(x)=a_n+\frac 12\Big((1+b_n)(x-a_n)+(1-b_n)\sqrt{(x-a_n)^2-4/e_n^2}\Big)+\frac{r_{n5}(x)}
{\sqrt{(e_n(x-a_n))^2-4}},  
\end{align}
where $|r_{n5}(x)|\le c(\mu)n^{-3/2}$.
\end{proposition}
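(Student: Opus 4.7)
The plan is to recognize Proposition~\ref{th4bpro1c} as essentially a restatement of Theorem~\ref{th4a} evaluated on the real interval $\hat{I}_n$, combined with one algebraic rewriting of $M_n$. First I would reduce the displayed leading part to $M_n(x)$. Since $e_n=(1-b_n)/\sqrt{1-d_n}$, one has $(1-b_n)^2/e_n^2=1-d_n$, so
\begin{equation}\notag
(1-b_n)^2(z-a_n)^2-4(1-d_n)=(1-b_n)^2\Big((z-a_n)^2-\frac 4{e_n^2}\Big),
\end{equation}
which, with the same branch of the square root used in the definition of $M_n(z)$, gives
\begin{equation}\notag
M_n(z)=a_n+\tfrac 12\Big((1+b_n)(z-a_n)+(1-b_n)\sqrt{(z-a_n)^2-4/e_n^2}\Big).
\end{equation}
Thus the first part of the proposition is precisely $M_n(x)$.

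Next I would invoke Theorem~\ref{th4a}, which yields, for $z\in D_n$,
\begin{equation}\notag
T_n(z)=M_n(z)+\frac{r_{n1}(z)\,n^{-3/2}}{\sqrt{(e_n(z-a_n))^2-4}},\qquad |r_{n1}(z)|\le c(\mu).
\end{equation}
Setting $r_{n5}(z):=r_{n1}(z)/n^{3/2}$ already gives the desired bound $|r_{n5}|\le c(\mu)n^{-3/2}$ on the interior of $D_n$. It remains to extend the identity to the real line segment $\hat{I}_n$ (the base of $D_n$). By Lemma~\ref{l7.4} applied to $\mu^*$, the function $T_n(z)$ is continuous up to $\mathbb{R}$, and $M_n(z)$ is continuous up to $\mathbb{R}$ since it is the reciprocal Cauchy transform of the (bounded) Meixner measure $\mu_{a_n,b_n,d_n}$. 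For $x\in \hat{I}_n$ one has $|x-a_n|\le 2/e_n-h$, which, with $h=c_2(\mu)n^{-3/2}$, implies $4-(e_n(x-a_n))^2\ge 2e_nh>0$, so the factor $1/\sqrt{(e_n(z-a_n))^2-4}$ also extends continuously (as a purely imaginary quantity) to $z=x$. Passing to the limit $\Im z\downarrow 0$ in Theorem~\ref{th4a} therefore yields the formula of the proposition pointwise on $\hat{I}_n$, with $r_{n5}(x)$ inheriting the bound $c(\mu)n^{-3/2}$.

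The only mild subtlety, and the one step to verify carefully, is the consistency of the chosen branch of the square root on the boundary: $\sqrt{(z-a_n)^2-4/e_n^2}$ must be taken with positive imaginary part as $\Im z\downarrow 0$ for $x\in \hat{I}_n$, and similarly for $\sqrt{(e_n(z-a_n))^2-4}=e_n\sqrt{(z-a_n)^2-4/e_n^2}$; both are determined by the requirement $\Im M_n(z)\ge 0$ for $\Im z>0$. With this fixed, no further analysis is needed---the proposition is a direct corollary of Theorem~\ref{th4a} together with continuity up to the real axis, and no hard estimate is required beyond those already proved in Sections 6 and 7.
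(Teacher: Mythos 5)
Your proof is correct and follows exactly the route the paper intends: the paper's own proof is the single sentence ``The proof immediately follows from Theorem~\ref{th4a},'' and you have supplied precisely the missing details, namely the algebraic identity $(1-b_n)^2(z-a_n)^2-4(1-d_n)=(1-b_n)^2\big((z-a_n)^2-4/e_n^2\big)$ rewriting $M_n$, plus the continuity-up-to-the-real-axis argument needed to pass from the open rectangle $D_n$ to its closed base $\hat I_n$. Your care about the branch of the square root and the strictly positive lower bound $4-(e_n(x-a_n))^2\ge 2e_nh$ on $\hat I_n$ is exactly what justifies the boundary limit, so no gap remains.
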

\begin{proof}
The proof immediately follows from Theorem~\ref{th4a}. 
\end{proof}

Let $h^*=n^{-6/5}$. Denote
\begin{align}
A_{n}^*:=T_n(x_{n1}^*),\,\, 
B_{n}^*:=T_n(x_{n2}^*),\quad\text{where}\quad x_{n1}^*:=a_n-2/e_n+h^*,\,\, x_{n2}^*:=a_n+2/e_n-h^*.\notag
\end{align}

From Proposition~\ref{th4bpro1c} we see that, for $|x-a_n|\le \frac {2}{e_n}-h^*$, 
\begin{align}
&\Im T_n(x)\ge \frac 12(1-b_n)\sqrt{4/e_n^2-(x-a_n)^2}-c(\mu)(h^*)^2\ge
\frac 12 \sqrt{h^*}-c(\mu)(h^*)^2\ge\frac 14\sqrt{h^*},\notag\\
&\Im T_n(x)\le \frac 12(1-b_n)\sqrt{4/e_n^2-(x-a_n)^2}+c(\mu)(h^*)^2\le 2.\notag
\end{align} 
Moreover we note as well that 
\begin{equation}\notag
|\Re T_n(x)|\le |a_n|+\frac 12(1-b_n)|x-a_n|+c(\mu)(h^*)^2. 
\end{equation}
Therefore 
\begin{equation}\label{th4.7}
\frac 14\sqrt{h^*}\le \Im A_n^*\le \sqrt{h^*},\quad
\frac 14\sqrt{h^*}\le \Im B_n^*\le \sqrt{h^*} 
\end{equation}
and
\begin{equation}\label{th4.7a}
|\Re A_n^*-a_n+1/e_n|+|\Re B_n^*-a_n-1/e_n|\le 2h^*.  
\end{equation}

Since $T_n(x),\,x\in\mathbb R$, is a homeomorphism between the real axis and the curve $\hat{\gamma}_n^*$ 
and $\Re T_n(x_1)<\Re T_n(x_2)$ for $x_1<x_2$,
we see that the function $\hat{y}_n^*(x)$ satisfies the inequality 
\begin{equation}\label{th4.7**}
\frac 14\sqrt{h^*}\le\hat{y}_n^*(x)\le 2\quad\text{for}\quad \Re A_{n}^*\le x\le\Re B_{n}^*.
\end{equation}

We saw before that $\hat{y}_n(x)\ge \hat{y}_n^*(x),\,x\in\mathbb R$. In the following proposition we evaluate
the distance between these functions. Denote $\eta_n:=\tau(\{|u|>\sqrt{n-1}/\pi\})(h^*)^{-3/2}$. It is easy to see that
$\eta_n\le c(\mu)n^{-6/5}$.
\begin{proposition}\label{th4bpro3}
For $\Re A_{n}^*\le x\le\Re B_{n}^*$, we have the estimate 
\begin{equation}\notag
0\le \hat{y}_n(x)-\hat{y}_n^*(x)\le c(\mu)\eta_n.
\end{equation}
\end{proposition}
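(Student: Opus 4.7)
The lower bound $\hat y_n(x)\ge\hat y_n^*(x)$ is immediate from $S_n(\mathbb{C}^+)\subseteq T_n(\mathbb{C}^+)$, which has already been observed in the text. Writing $\delta:=\hat y_n(x)-\hat y_n^*(x)$, the plan is to prove the matching upper bound $\delta\le c(\mu)\eta_n$ by sandwiching $\Im T_n^{(-1)}(x+i\hat y_n(x))$ between two estimates.

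From the explicit formula (\ref{th4bpro1.1}) I would extract
\begin{equation*}
\Im T_n^{(-1)}(x+iy)=y\bigl(1-J(x+iy)\bigr),\quad J(z):=(n-1)\int_{|u|\le\sqrt{n-1}/\pi}\frac{\tau(du)}{|u-z\sqrt n|^2},
\end{equation*}
so that the curve $\hat\gamma_n^*$ is characterized by $J(x+i\hat y_n^*(x))=1$. The trivial estimate $|u-z\sqrt n|^2\ge y^2 n$ already forces $y<1$ on both $\hat\gamma_n^*$ and $\hat\gamma_n$; this a priori ceiling $\hat y_n(x)\le 1$ will be used repeatedly below.

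The upper half of the sandwich comes from Proposition~\ref{th4bpro1a} applied at $p:=x+i\hat y_n(x)\in\hat\gamma_n$: since $S_n^{(-1)}(p)\in\mathbb R$,
\begin{equation*}
\Im T_n^{(-1)}(p)\le\bigl|T_n^{(-1)}(p)-S_n^{(-1)}(p)\bigr|\le\frac{\tau(\{|u|>\sqrt{n-1}/\pi\})}{\hat y_n(x)}\le\frac{4\tau(\{|u|>\sqrt{n-1}/\pi\})}{\sqrt{h^*}},
\end{equation*}
using $\hat y_n(x)\ge\hat y_n^*(x)\ge\tfrac14\sqrt{h^*}$ from (\ref{th4.7**}). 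For the opposite direction I differentiate to get $-\partial_y J(x+iy)=2n(n-1)y\int_{|u|\le\sqrt{n-1}/\pi}|u-z\sqrt n|^{-4}\,\tau(du)$, and the crude estimate $|u-z\sqrt n|\le c(\mu)\sqrt n$ (valid because the ceiling $y\le 1$ keeps $y\sqrt n$, $|x|\sqrt n$, and $|u|$ all of size $O(\sqrt n)$) together with $\tau([-\sqrt{n-1}/\pi,\sqrt{n-1}/\pi])\ge\tfrac12$ for $n\ge n_1(\mu)$ gives $-\partial_y J(x+iy)\ge c(\mu)y$. Integrating from $\hat y_n^*(x)$ to $\hat y_n(x)$ and using $y\ge\tfrac14\sqrt{h^*}$ throughout,
\begin{equation*}
\Im T_n^{(-1)}(p)=\hat y_n(x)\bigl[J(x+i\hat y_n^*(x))-J(x+i\hat y_n(x))\bigr]\ge c(\mu)\,h^*\,\delta.
\end{equation*}

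Combining the two bounds yields $\delta\le c(\mu)\,\tau(\{|u|>\sqrt{n-1}/\pi\})/(h^*)^{3/2}=c(\mu)\eta_n$, which is the required estimate. The only nonroutine step is the quantitative lower bound on $-\partial_y J$; once the a priori ceiling $\hat y_n(x)\le 1$ is in place this reduces to the coarse denominator bound $|u-z\sqrt n|^4\le c(\mu)n^2$, and the whole argument amounts to a two-sided exploitation of the explicit formulas for $T_n^{(-1)}$ and $S_n^{(-1)}$ at the single point $p=x+i\hat y_n(x)$.
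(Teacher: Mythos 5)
Your proposal is correct and follows essentially the same route as the paper: the paper also exploits the explicit formula (\ref{th4bpro3.1}) to measure, at a fixed abscissa $x$, the quantitative gap between the two characteristic equations for $\hat\gamma_n$ and $\hat\gamma_n^*$, obtaining the same two-sided comparison (a lower bound proportional to $\hat y_n^*(x)\,\delta/n$ for the difference of the truncated integrals, and the tail bound $\tau(\{|u|>\sqrt{n-1}/\pi\})/(n(\hat y_n^*(x))^2)$) and then invoking $\hat y_n^*(x)\ge\tfrac14\sqrt{h^*}$ exactly as you do. The only cosmetic differences are that the paper phrases the argument as moving upward from $\hat\gamma_n^*$ and testing when $\Im S_n^{(-1)}(x+i(\hat y_n^*(x)+v))$ becomes nonnegative, whereas you evaluate $\Im T_n^{(-1)}$ at the $\hat\gamma_n$-point and appeal to Proposition~\ref{th4bpro1a} for the tail; these are duals of the same estimate.
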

\begin{proof}
For $\Re A_{n}^*\le x\le\Re B_{n}^*$ and $z=x+i(\hat{y}_n^*(x)+v)$ with $0\le v\le 3$, we have the following 
lower bound, using (\ref{th4.7a}) and (\ref{th4.7**}),
\begin{align}\label{th4bpro3.2}
&\int_{|u|\le \sqrt{n-1}/\pi}\frac{\tau(du)}{(u-x\sqrt n)^2+n(\hat{y}_n^*(x))^2}-
\int_{|u|\le \sqrt{n-1}/\pi}\frac{\tau(du)}{(u-x\sqrt n)^2+n(\hat{y}_n^*(x)+v)^2}\notag\\
&\ge \int_{|u|\le \sqrt{n-1}/\pi}\frac{nv(2\hat{y}_n^*(x)+v)\,\tau(du)}{((u-x\sqrt n)^2+n(\hat{y}_n^*(x)+v)^2)^2}
\ge\frac {c(\mu)\hat{y}_n^*(x)v}n
\end{align}
and the upper bound
\begin{equation}\label{th4bpro3.3}
\int_{|u|>\sqrt{n-1}/\pi}\frac{\tau(du)}{(u-\Re x\sqrt n)^2+n(\hat{y}_n^*(x)+v)^2}\le
\frac{\tau(\{|u|>\sqrt{n-1}/\pi\})}{n(\hat{y}_n^*(x))^2}.
\end{equation}
Applying (\ref{th4bpro3.2}) and (\ref{th4bpro3.3}) to the formula (\ref{th4bpro3.1}) we see
that, for the $x$ considered above, $\Im S_n^{(-1)}(x+i(\hat{y}_n^*(x)+v_0)\ge 0$, where 
$v_0=c(\mu)\tau(\{|u|>\sqrt{n-1}/3\})/(\hat{y}_n^*(x))^3$ with some constant $c(\mu)$.
By (\ref{th4.7**}), we arrive at the~assertion of the proposition.
\end{proof}

Consider the points $A_{n}:=A_{n}^*+ih_{-}$ and $B_{n}:=B_{n}^*+ih_{+}$,
where $h_{\pm}\ge 0$ is chosen in the way such that $A_{n}\in\hat{\gamma}_n$ and 
$B_{n}\in\hat{\gamma}_n$.
Denote $x_{n1}:=S_n^{(-1)}(A_{n})$ and $x_{n2}:=S_n^{(-1)}(B_{n})$.

\begin{proposition}\label{th4bpro4}
The following inequalities hold
\begin{equation}\notag
|x_{n1}^*-x_{n1}|+|x_{n2}^*-x_{n2}|\le c(\mu)\eta_n.
\end{equation}
\end{proposition}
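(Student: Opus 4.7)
The plan is to split the difference $x_{n1}^* - x_{n1} = T_n^{(-1)}(A_n^*) - S_n^{(-1)}(A_n)$ by the triangle inequality
\[
\bigl|T_n^{(-1)}(A_n^*) - S_n^{(-1)}(A_n)\bigr| \le \bigl|T_n^{(-1)}(A_n^*) - T_n^{(-1)}(A_n)\bigr| + \bigl|T_n^{(-1)}(A_n) - S_n^{(-1)}(A_n)\bigr|,
\]
and then bound each term using the three preceding propositions together with a control on the vertical gap $h_-$ between the curves $\hat{\gamma}_n^*$ and $\hat{\gamma}_n$ at abscissa $\Re A_n^*$. An identical argument, with $B_n^*, B_n$ replacing $A_n^*, A_n$, yields the analogous bound for $|x_{n2}^* - x_{n2}|$.

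For the first term, Proposition~\ref{th4bpro1} (the Lipschitz bound for $T_n^{(-1)}$) gives
\[
\bigl|T_n^{(-1)}(A_n^*) - T_n^{(-1)}(A_n)\bigr| \le c(\mu)\,|A_n^* - A_n| = c(\mu)\, h_-.
\]
Since $A_n^* \in \hat{\gamma}_n^*$ and $A_n \in \hat{\gamma}_n$ share the real part $\Re A_n^*$, I would identify $h_- = \hat{y}_n(\Re A_n^*) - \hat{y}_n^*(\Re A_n^*)$; since the abscissa $\Re A_n^*$ lies trivially in $[\Re A_n^*, \Re B_n^*]$, Proposition~\ref{th4bpro3} immediately yields $h_- \le c(\mu)\eta_n$.

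For the second term, Proposition~\ref{th4bpro1a} (applied at $A_n$, which lies in the closure of $S_n(\mathbb{C}^+)$ with $\Im A_n > 0$, hence to which the estimate extends by continuity) gives
\[
\bigl|T_n^{(-1)}(A_n) - S_n^{(-1)}(A_n)\bigr| \le \frac{\tau(\{|u|>\sqrt{n-1}/\pi\})}{\Im A_n}.
\]
From (\ref{th4.7}) one has $\Im A_n \ge \Im A_n^* \ge \tfrac14\sqrt{h^*}$, so this term is at most $4\,\tau(\{|u|>\sqrt{n-1}/\pi\})/\sqrt{h^*} = 4\eta_n h^* \le 4\eta_n$, using $h^* = n^{-6/5}<1$ and the definition $\eta_n = \tau(\{|u|>\sqrt{n-1}/\pi\})(h^*)^{-3/2}$. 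Summing the two estimates gives $|x_{n1}^* - x_{n1}| \le c(\mu)\eta_n$, and the bound for $|x_{n2}^* - x_{n2}|$ follows symmetrically.

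The main care, rather than any substantive obstacle, lies in verifying that the inverses $T_n^{(-1)}$ and $S_n^{(-1)}$ extend continuously to the boundary curves $\hat{\gamma}_n^*$ and $\hat{\gamma}_n$ at the points where we evaluate them, which is guaranteed by Lemma~\ref{l7.4}, and in confirming that $A_n$ (pushed vertically off $\hat{\gamma}_n^*$ to reach $\hat{\gamma}_n$) still lies in the image $T_n(\mathbb{C}^+)$, which follows from the inclusion $\hat{y}_n \ge \hat{y}_n^*$ noted just before Proposition~\ref{th4bpro3}. The quantitative content has already been packaged into Propositions~\ref{th4bpro1}, \ref{th4bpro1a}, and \ref{th4bpro3}, so the remainder is a short chaining argument.
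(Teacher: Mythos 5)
Your proof is correct and follows the same route as the paper: the same triangle inequality with intermediate point $T_n^{(-1)}(A_n)$, the first term controlled via the Lipschitz bound of Proposition~\ref{th4bpro1} together with the bound $h_-\le c(\mu)\eta_n$ from Proposition~\ref{th4bpro3}, and the second term via Proposition~\ref{th4bpro1a} and the lower bound on $\Im A_n$ from $(\ref{th4.7})$. You have merely made explicit a couple of steps the paper leaves implicit (the identification $4\tau(\{|u|>\sqrt{n-1}/\pi\})/\sqrt{h^*}=4\eta_n h^*\le 4\eta_n$, and the continuity-extension of the inverse maps to the boundary curves), which is fine.
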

\begin{proof}
Write
\begin{equation}\notag
|x_{n1}^*-x_{n1}|\le |T_n^{(-1)}(A_{n}^*)-T_n^{(-1)}(A_{n})|
+|T_n^{(-1)}(A_{n})-S_n^{(-1)}(A_{n})|.
\end{equation}
By Propositions~\ref{th4bpro1} and \ref{th4bpro3} we have the upper bound
\begin{equation}\notag
|T_n^{(-1)}(A_{n}^*)-T_n^{(-1)}(A_{n})|=|T_n^{(-1)}(A_{n}^*)-T_n^{(-1)}(A_{n}^*+ih_{-})|\le 
c(\mu)\eta_n.
\end{equation}
Moreover, by Proposition~\ref{th4bpro1a} and the estimate (\ref{th4.7}), we obtain
\begin{equation}\notag
|T_n^{(-1)}(A_{n})-S_n^{(-1)}(A_{n})|\le \tau(\{|u|>\sqrt{n-1}/\pi\}/{\Im A_n}\le
4\tau(\{|u|>\sqrt{n-1}/\pi\}/\sqrt{h^*}.
\end{equation}
Hence
\begin{equation}\notag
|x_{n1}^*-x_{n1}|\le c(\mu)\eta_n.
\end{equation}
The same estimate holds for $|x_{n2}^*-x_{n2}|$. Thus, the proposition is proved. 
\end{proof}

Now we can complete the proof of Theorem~\ref{th4b}.
Consider the interval $[\tilde{x}_{n1},\tilde{x}_{n2}]$, where $\tilde{x}_{n1}:=\max\{x_{n1}^*,x_{n1}\}$ and
$\tilde{x}_{n2}:=\min\{x_{n2}^*,x_{n2}\}$. Note that, by Proposition~\ref{th4bpro4}, 
$|\tilde{x}_{n1}-a_n+2/e_n|\le c(\mu)n^{-6/5}$ and $|\tilde{x}_{n2}-a_n-2/e_n|\le c(\mu)n^{-6/5}$.

Let $x\in[\tilde{x}_{n1},\tilde{x}_{n2}]$. Since $S_n(x)\in\hat{\gamma}_n$ and $T_n(x)\in\hat{\gamma}_n^*$,
by Proposition~\ref{th4bpro3}, there exists 
\begin{equation}\label{th4b.1}
h(x)\in[0,c(\mu)\eta_n]
\end{equation}
such that $S_n(x)-ih(x)\in\hat{\gamma}_n^*$. By Proposition~\ref{th4bpro1},
\begin{equation}\notag
|T_n^{(-1)}(S_n(x))-T_n^{(-1)}(S_n(x)-ih(x))|\le c(\mu)h(x)\le
c(\mu)\eta_n  
\end{equation}
and, by Proposition~\ref{th4bpro1a} and the lower bound in (\ref{th4.7**}),
\begin{equation}\notag
|T_n^{(-1)}(S_n(x))-S_n^{(-1)}(S_n(x))|\le\frac{\tau(\{|u|>\sqrt{n-1}/\pi\})}{\Im S_n(x)}
\le 4\frac{\tau(\{|u|>\sqrt{n-1}/\pi\})}{\sqrt{h^*}}. 
\end{equation}
Using the last two bounds we obtain
\begin{align}\label{th4b.2}
&|S_n^{(-1)}(S_n(x))-T_n^{(-1)}(S_n(x)-ih(x))|\notag\\
&\le |T_n^{(-1)}(S_n(x))-S_n^{(-1)}(S_n(x))|
+|T_n^{(-1)}(S_n(x))-T_n^{(-1)}(S_n(x)-ih(x))|\le c(\mu)\eta_n.\notag\\
\end{align}
Denote $\tilde{x}:=T_n^{(-1)}(S_n(x)-ih(x))$. This point lies in $[\tilde{x}_{n1},\tilde{x}_{n2}]$ 
and, by (\ref{th4b.2}), $|x-\tilde{x}|\le c(\mu)\eta_n$. It remains to note that
\begin{equation}\notag
|T_n(x)-S_n(x)|\le |T_n(x)-T_n(\tilde{x})|+|T_n(\tilde{x})-S_n(x)|=|T_n(x)-T_n(\tilde{x})|+|h(x)|.
\end{equation}
It follows from Proposition~\ref{th4bpro1b} and the relation (\ref{th4b.1}) that
\begin{equation}\notag
|T_n(x)-S_n(x)|\le c(\mu)\frac{\eta_n+n^{-3/2}}{\sqrt{(2/e_n)^2-(x-a_n)^2}},\quad 
x\in[\tilde{x}_{n1},\tilde{x}_{n2}]. 
\end{equation} 
The statement of the theorem immediately follows from this inequality.

\section{Asymptotic expansion of $\int_{\mathbb R} |p_n(x)-p_w(x)|\,dx$} 

In this section we shall prove Corollary~\ref{corth7.1}. Indeed, by the estimate (\ref{asden2}), 
we have, for $n\ge n_1$, 
\begin{align}\label{exp1.1}
\int_{\mathbb R} |p_n(x)-p_w(x)|\,dx=\int_{I_n} |p_n(x)-p_w(x)|\,dx+\theta\frac {c(\mu)}{n^{6/5}}.
\end{align}
First let us assume that $m_3\ne 0$. Using Theorems~\ref{th5},~\ref{th4c},
we easily conclude that
\begin{align}\notag
&\int_{I_n} |p_n(x)-p_w(x)|\,dx=\int_{I_n} |p_n^*(x)-p_w(x)|\,dx+\theta\int_{I_n} |p_n^*(x)-p_w(x)
-\tilde{\rho}(x)|\,dx\notag\\ 
&=\int_{[-2,2]} |(1-a_nx)p_w(x)-p_w(x+a_n)|\,dx+\theta c(\mu)(|a_n|^{3/2}+n^{-1})\notag\\
&=\frac{|a_n|}{2\pi}\int_{[-2,2]} |x|\frac{|3-x^2|}{\sqrt{4-x^2}}\,dx+\theta c(\mu)(|a_n|^{3/2}+n^{-1})=
\frac{2|a_n|}{\pi}+c(\mu)(|a_n|^{3/2}+n^{-1}).\notag
\end{align}
Applying this relation to (\ref{exp1.1}) we get the expansion (\ref{2.9}).

Let $m_3=0$. By Theorems~\ref{th5} and \ref{th4c},
we get
\begin{align}\notag
&\qquad\qquad\qquad\qquad\int_{I_n} |p_n(x)-p_w(x)|\,dx\notag\\
&=\int_{[-2,2]}\Big|p_w(x)-p_w(\frac x{e_n})+\Big(\frac 12 d_n-\frac 1n-\Big(b_n
-\frac 1n\Big)\frac{x^2}{e_n^2}\Big)p_w(x)\Big|\,\frac{dx}{e_n}+\theta\frac {c(\mu)}{n^{6/5}}\notag\\
&=\frac 1{2\pi}\int_{[-2,2]}\frac{|2\Big(d_n-\frac 2n\Big)-\Big(4\Big(b_n
-\frac 1n\Big)+\frac 12 d_n-\frac 1n-b_n+\frac 12d_n\Big)x^2+\Big(b_n-\frac 1n\Big)x^4|}{\sqrt{4-x^2}}\,dx\notag\\
&+\theta\frac {c(\mu)}{n^{6/5}}=\frac{|m_4-2|}{2\pi n}\int_{[-2,2]}\frac{|2-4x^2+x^2|}{\sqrt{4-x^2}}\,dx
+\theta\frac {c(\mu)}{n^{6/5}}=\frac{2|m_4-2|}{\pi n}+\theta\frac {c(\mu)}{n^{6/5}}.
\end{align}
Taking into account (\ref{exp1.1}) we obtain (\ref{2.10}).

\section{Asymptotic expansion of the free entropy and the free Fisher information} 

In this section we prove Corollaries~\ref{corth7.2} and \ref{corth7.3}. First we find an asymptotic expansion of 
the logarithmic energy $E(\mu_n)$ of the measure $\mu_n$. Recall that (see~\cite{HiPe:2000})
\begin{align}\label{exp.1}
-E(\mu_n)&=\int\int_{\mathbb R^2}\log|x-y|\,\mu_n(dx)\mu_n(dy)=I_1(\mu_n)+I_2(\mu_n):=\notag\\
&=\int\int_{I_n\times I_n}\log|x-y|\,\mu_n(dx)\mu_n(dy)+
\int\int_{\mathbb R^2\setminus (I_n\times I_n)}\log|x-y|\,\mu_n(dx)\mu_n(dy).
\end{align}
Using (\ref{asden3}) and (\ref{asden2}),   
we conclude that
\begin{align}\label{exp.2}
|I_2(\mu_n)|&\le 4\int_{\mathbb R\setminus I_n}p_n(x)\,\int_{\mathbb R}|\log|x-y||\,p_n(y)\,dy\,dx\le 
c(\mu)\int_{\mathbb R\setminus I_n}p_n(x)\,dx\notag\\
&\le c(\mu)n^{-6/5}. 
\end{align}
By (\ref{asden}), we have 
\begin{align}\label{exp.3}
I_1(\mu_n)&= \int\int_{I_n\times I_n}\log|x-y|\,v_n(x-a_n)v_n(y-a_n)\,dx\,dy
+c(\mu)\theta n^{-6/5}\notag\\
&=\int\int_{\mathbb R^2}\log|x-y|\,v_n(x)v_n(y)\,dx\,dy
+c(\mu)\theta n^{-6/5}.
\end{align}
Recalling the definition of $v_n(x)$ we see that
\begin{align}\label{exp.4}
&\int\int_{\mathbb R^2}\log|x-y|\,v_n(x)v_n(y)\,dx\,dy=\tilde{I}_1(v_n)+\tilde{I}_2(v_n)+\tilde{I}_3(v_n)
+\tilde{I}_4(v_n)+r_n\notag\\
&:=\Big(1+\frac 12 d_n-a_n^2-\frac 1{n}\Big)^2 \int\int_{\mathbb R^2}\log|x-y|\,
p_w(e_nx)\,p_w(e_ny)\,dx\,dy\notag\\
&-2\Big(1+\frac 12 d_n-a_n^2-\frac 1{n}\Big)a_n\int\int_{\mathbb R^2}x\log|x-y|\,
p_w(e_nx)\,p_w(e_ny)\,dx\,dy\notag\\
&+a_n^2\int\int_{\mathbb R^2}xy\log|x-y|\,p_w(e_nx)\,
p_w(e_ny)\,dx\,dy\notag\\
&-2\Big(b_n-a_n^2-\frac 1{n}\Big)\int\int_{\mathbb R^2}x^2\log|x-y|\,
p_w(e_nx)\,p_w(e_ny)\,dx\,dy
+c(\mu)\theta n^{-6/5}. 
\end{align}

In view of $E(\mu_w)=1/4$ and $e_n^{-2}=1-d_n+2b_n+c(\mu)\theta n^{-2},\,
\log e_n=\frac{d_n}2-b_n+c(\mu)\theta n^{-2}$,
note that
\begin{align}\label{exp.5}
\tilde{I}_1(v_n)&=\Big(1+\frac 12 d_n-a_n^2-\frac 1{n}\Big)^2 e_n^{-2}
\Big(\int\int_{\mathbb R^2}\log|x-y|
p_w(x)p_w(y)\,dx\,dy-\log e_n\Big)\notag\\
&=-E(\mu_w)+\frac{a_n^2}2+c(\mu)\theta n^{-2}. 
\end{align}
Since the function $\int_{\mathbb R}\log|x-y|p_w(y)\,dy$ is even, we see that
$\tilde{I}_2(v_n)=0$. In order to calculate $\tilde{I}_3(v_n)$ we easily deduce that
\begin{equation}\notag
\int_{-2}^2 up_w(u)\log|x-u|\,du=-x+x^3/6,\quad x\in[-2,2]. 
\end{equation}
Therefore we obtain
\begin{equation}\label{exp.6}
\tilde{I}_3(v_n)=-a_n^2\int_{-2}^2xp_w(x)(x-x^3/6)\,dx+c(\mu)\theta n^{-2}=-\frac 23a_n^2+c(\mu)\theta n^{-2}. 
\end{equation}

Using the following well-known formula (see~\cite{HiPe:2000}, p. 197)
\begin{equation}\notag
\int_{-2}^2 p_w(u)\log|x-u|\,du=\frac{x^2}4-\frac 12,\quad x\in[-2,2],
\end{equation}
we deduce
\begin{equation}\notag
\int\int_{\mathbb R^2}x^2\log|x-y|\,
p_w(x)\,p_w(y)\,dx\,dy=\int_{-2}^2x^2\Big(\frac{x^2}4-\frac 12\Big)p_w(x)\,dx=0. 
\end{equation}
Therefore
\begin{equation}\label{exp.7}
\tilde{I}_4(v_n)=c(\mu)\theta n^{-2}. 
\end{equation}

By (\ref{exp.3})--(\ref{exp.7}), we arrive at the formula
\begin{equation}\notag
I_1(\mu_n)=-E(\mu_w)-\frac 16a_n^2+c(\mu)\theta n^{-3/2}. 
\end{equation}
Finally, in view of (\ref{exp.1})--(\ref{exp.2}), we get
\begin{equation}\notag
-E(\mu_n)=-E(\mu_w)-\frac 16a_n^2+c(\mu)\theta n^{-3/2}  
\end{equation}
and we obtain the assertion of Corollary~\ref{corth7.2}.

Now let us prove Corollary~\ref{corth7.3}. We shall show that the free Fisher information of the measure $\mu_n$ 
has the form (\ref{2.11}).
Denote 
\begin{align}\label{exp.9}
\Phi(\mu_n)=\Phi_1(\mu_n)+\Phi_2(\mu_n):=\frac{4\pi^2}3\int_{I_n}p_n(x)^3\,dx+
\frac{4\pi^2}3\int_{\mathbb R\setminus I_n}p_n(x)^3\,dx.
\end{align}
As before we see that, by (\ref{asden2}),
\begin{equation}\label{exp.10}
\Phi_2(\mu_n)\le c(\mu)\int_{\mathbb R\setminus I_n}p_n(x)\,dx\le c(\mu)n^{-6/5}.
\end{equation}
On the other hand, by (\ref{loc.6}), we have 
\begin{equation}\label{exp.11}
\Phi_1(\mu_n)=\frac{4\pi^2}3\int_{\mathbb R}v_n(x)^3\,dx+c(\mu)\theta n^{-6/5}. 
\end{equation}
It is easy to see that the integral on the right hand-side of (\ref{exp.11}) is equal to
\begin{align}
\Big(1&+3\Big(\frac 12 d_n-a_n^2-\frac 1n\Big)\Big)e_n^{-1}\int_{\mathbb R}p_w(x)^3\,dx
-3\Big(b_n-a_n^2-\frac 1n\Big)e_n^{-3}\int_{\mathbb R}x^2p_w(x)^3\,dx \notag\\
&+3a_n^2e_n^{-3}\int_{\mathbb R}x^2p_w(x)^3\,dx+c(\mu)\theta n^{-3/2}=\frac 3{4\pi^2}(1+a_n^2)
+c(\mu)\theta n^{-6/5}.\notag
\end{align}
Therefore we finally have by (\ref{exp.9})--(\ref{exp.11})
\begin{equation}\notag
\Phi(\mu_n)=1+a_n^2+c(\mu)\theta n^{-3/2}=\Phi(\mu_w)+a_n^2
+c(\mu)\theta n^{-6/5}. 
\end{equation}
Thus, Corollary~\ref{corth7.3} is proved.

\end{document}